\documentclass[preprint,12pt]{elsarticle}

\usepackage{mathrsfs}
\usepackage{amssymb}
\usepackage{amsthm}
\usepackage{amsmath}
\usepackage{dsfont}
\usepackage{graphicx}
\usepackage{float}

\newtheorem{theorem}{Theorem}[section]
\newtheorem{lemma}[theorem]{Lemma}
\newtheorem{corollary}[theorem]{Corollary}
\newtheorem{conj}[theorem]{Conjecture}
\newtheorem{problem}[theorem]{Problem}
\newtheorem{claim}{Claim}
\newtheorem{prop}{Proposition}



\allowdisplaybreaks

\journal{Linear and Multilinear Algebra}

\begin{document}

\begin{frontmatter}

\title{On trees with extremal extended spectral radius}

\author[1]{Junli Hu}
\ead{hujunli3020@163.com}

\author[1,2]{Xiaodan Chen\corref{cor1}}
\ead{x.d.chen@live.cn}
\cortext[cor1]{Corresponding author.}

\author[1]{Qiuyun Zhu}
\ead{zhuqiuyun970302@163.com}

\address[1]{College of Mathematics and Information Science, Guangxi University,\\
Nanning 530004, Guangxi, P. R. China}

\address[2]{Guangxi Center for Mathematical Research, Guangxi University,\\
Nanning 530004, Guangxi, P. R. China}

\begin{abstract}
Let $G$ be a simple connected graph with $n$ vertices, and let $d_{i}$ be the degree of the vertex $v_{i}$ in $G$.
The extended adjacency matrix of $G$ is defined so that the $ij$-entry is 
$\frac{1}{2}(\frac{d_{i}}{d_{j}}+\frac{d_{j}}{d_{i}})$ if the vertices $v_i$ and $v_j$ are adjacent in $G$, and 0 otherwise.
This matrix was originally introduced for developing novel topological indices used in the QSPR/QSAR studies.
In this paper, we consider extremal problems of the largest eigenvalue of the extended adjacency matrix
(also known as the extended spectral radius) of trees.
We show that among all trees of order $n\geq 5$,
the path $P_n$ (resp., the star $S_n$) uniquely minimizes (resp., maximizes) the extended spectral radius.
We also determine the first five trees with the maximal extended spectral radius.
\end{abstract}

\begin{keyword}
extended adjacency matrix, extended spectral radius, tree
\MSC 05C50, 05C35
\end{keyword}

\end{frontmatter}

\section{Introduction}

Let $G$ be a simple connected graph with vertex set $V(G)=\{v_1,v_2,\dots,v_n\}$ and edge set $E(G)$.
For $i\in\{1,2,\dots,n\}$, denote by $d_{i}$  the degree of the vertex $v_i$ in $G$.
We will write $\Delta(G)$ and $\delta(G)$ for the maximum degree and minimum degree of $G$, respectively.
We also denote by $G-v_i$ the graph obtained from $G$ by deleting the vertex $v_i$ (and those edges incident with it).
As usual, let $S_{n}$, $P_{n}$, and $C_{n}$ denote the star, path, and cycle on $n$ vertices, respectively.

The {\it extended adjacency matrix} of a graph $G$ is defined as the $n\times n$ matrix $A_{ex}(G)=(a_{ij}^{ex})$, where
$$
a_{ij}^{ex}=\left\{\begin{matrix}
\frac{1}{2}(\frac{d_{i}}{d_{j}} + \frac{d_{j}}{d_{i}})&\text{if } v_{i}v{_j}\in E(G),\\
0                                                            &\text{otherwise}.
\end{matrix}\right.
$$
This matrix was originally introduced by Yang et al. \cite{Yang}
for devising two new topological indices---the {\it extended spectral radius} and {\it extended energy},
which are defined as the largest eigenvalue and the sum of the absolute values of all eigenvalues of the matrix, respectively.
The two indices have turned out to be nice molecular descriptors that show lower degeneracy
and good performance in correlation with many physicochemical properties and biological activities of organic compounds.

Recall that the {\it (ordinary) adjacency matrix} of a graph $G$ is defined to be the $n\times n$ matrix $A(G)=(a_{ij})$, where
$$
a_{ij}=\left\{\begin{matrix}
1&\text{if } v_{i}v{_j}\in E(G),\\
0&\text{otherwise}.
\end{matrix}\right.
$$
Clearly, the extended adjacency matrix of a graph $G$ can be seen as a special weighted adjacency matrix of $G$,
which coincides exactly with the (ordinary) adjacency matrix of $G$ when $G$ is a regular graph.
If $G$ is an irregular graph, however, the extended adjacency matrix may contain more information of $G$.

In this paper, we are mainly concerned with the extended spectral radius of graphs.
Some lower and upper bounds on the extended spectral radius of graphs were established and the corresponding extremal graphs were characterized;
see \cite{Das,Ghorbani,wang} for details. Also, Nordhaus-Gaddum-type results for the extended spectral radius of graphs were given in \cite{wang}.
We here would like to consider the following problem, which is a variant of the famous Brualdi-Solheid's problem \cite{Brualdi}.

\begin{problem}\label{prob-1}
For a given class of graphs, characterize the graphs with the maximal or minimal extended spectral radius.
\end{problem}

Problem \ref{prob-1} is not easy to solve in general.
As a first step, we will settle this problem for the case of trees.
In the following, we let $\mathcal{T}_{n}$ be the set of all trees of order $n$,
and let $\eta_1(G)$ be the extended spectral radius of a graph $G$.

\begin{theorem}\label{T-1-2}
For any tree $T\in \mathcal{T}_n$ with $n\geq 5$, we have
$\eta_1(P_n)\leq\eta_1(T)\leq\eta_1(S_n)$,
with equality in the left (resp., right) inequality if and only if $T\cong P_n$ (resp., $T\cong S_n$).
\end{theorem}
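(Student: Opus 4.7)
The plan is to prove the two inequalities separately by classical graph-transformation arguments: exhibit a local move on trees that strictly increases (respectively, decreases) the extended spectral radius and carries any tree strictly closer to $S_n$ (respectively, $P_n$), then iterate until the extremal tree is reached. Equality characterizations follow from the strict monotonicity of each move.

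For the upper bound $\eta_1(T) \le \eta_1(S_n)$, the natural move is a Kelmans-type leaf relocation. If $T \ne S_n$, then $T$ has an edge $uv$ with $\min\{d_u, d_v\} \ge 2$; after relabelling assume $d_u \le d_v$. Define $T'$ by deleting every edge $uw$ with $w \in N_T(u) \setminus \{v\}$ and inserting $vw$ in its place. Then $T'$ is still a tree, $u$ has become a pendant of $v$, and the number of non-leaf vertices strictly decreases, so iteration terminates at $S_n$. The core step is to prove $\eta_1(T') > \eta_1(T)$. My plan is to take $y$ to be a unit Perron eigenvector of $A_{ex}(T)$ and combine the Rayleigh bound $\eta_1(T') \ge y^{\top} A_{ex}(T')\, y$ with the identity
\begin{equation*}
y^{\top}\bigl(A_{ex}(T') - A_{ex}(T)\bigr) y \;=\; 2\!\sum_{ij \in E(T')} a^{ex}_{ij}(T')\, y_i y_j \;-\; 2\!\sum_{ij \in E(T)} a^{ex}_{ij}(T)\, y_i y_j,
\end{equation*}
reducing the claim to strict positivity of the right-hand side. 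I would split the sum into (i) the relocated edges $uw \leftrightarrow vw$, handled by a Perron comparison $y_v \ge y_u$ (which can be arranged by choosing $v$ to attain $\max_i y_i$, since whenever $T \ne S_n$ such a $v$ must have a non-leaf neighbour available to play the role of $u$), and (ii) the edges incident with $v$ whose weights change merely because $d_v$ has grown, handled by the monotonicity of $t \mapsto \tfrac{1}{2}(t + 1/t)$ on $[1, \infty)$.

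For the lower bound $\eta_1(P_n) \le \eta_1(T)$, I plan a mirror-image path-straightening move. Given $T \ne P_n$, take a longest path $v_1 v_2 \cdots v_\ell$ of $T$; some interior vertex $v_i$ carries a nontrivial branch $B$ off the path. Detaching $B$ from $v_i$ and reattaching it as a pendant extension at the endpoint $v_\ell$ yields a tree $T'$ whose longest path is strictly longer than $\ell$, so iteration terminates at $P_n$. The inequality $\eta_1(T) > \eta_1(T')$ is established by the same Rayleigh-quotient technique, this time with $y$ taken to be the Perron eigenvector of $A_{ex}(T')$, again decomposing the matrix difference into shifted edges plus edges whose weights have drifted because of degree changes.

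The main obstacle, shared by both halves, is the following phenomenon: since $a^{ex}_{ij}$ depends on the degrees of both endpoints, every edge incident with a vertex whose degree has changed has its weight perturbed, so $A_{ex}(T') - A_{ex}(T)$ is supported on many positions beyond the moved edges. The Rayleigh-quotient difference is therefore an alternating sum of positive and negative terms, and pinning down the net sign hinges on a tight numerical comparison of Perron-vector entries at neighbouring vertices of distinct degree, together with careful use of the convex/monotone behaviour of $\tfrac{1}{2}(t + 1/t)$. I expect this bookkeeping, and in particular the Perron-vector control, to be the most delicate part of the proof.
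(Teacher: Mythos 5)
Your proposal is a plan rather than a proof: the two central claims---that the Kelmans-type leaf relocation strictly increases $\eta_1$ and that the path-straightening move strictly decreases it---are exactly the steps you defer as ``delicate bookkeeping,'' and they are where the difficulty lives. For the ordinary adjacency matrix these moves are classical because the perturbation $A(T')-A(T)$ is supported only on the relocated edges and is sign-definite against the Perron vector. Here it is not. Concretely, in your upper-bound step the weight of an edge $vw$ with $w\in N_T(v)$ fixed is $\tfrac12(t+1/t)$ with $t=d_v/d_w$, which is \emph{decreasing} in $d_v$ whenever $d_v<d_w$; so if $v$ has a neighbour of larger degree, growing $d_v$ strictly \emph{lowers} that edge weight, and monotonicity of $t+1/t$ on $[1,\infty)$ does not apply. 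Likewise, a relocated edge $uw\mapsto vw$ can have smaller weight after the move when $d_w>d_u$, so the inequality $y_v\ge y_u$ alone does not make the term $a'_{vw}y_vy_w-a_{uw}y_uy_w$ nonnegative. The Rayleigh-quotient difference is therefore a genuinely alternating sum, and no mechanism for controlling the negative terms is supplied. The paper itself warns (Section 4) that for $A_{ex}$ even adding an edge can decrease the spectral radius, which is a strong signal that edge-shifting arguments do not transfer wholesale; until the net sign of $y^{\top}(A_{ex}(T')-A_{ex}(T))y$ is actually established for every tree in the iteration (not just for a well-chosen $v$), the argument has a hole at its core. There are also smaller unproved assertions along the way, e.g.\ that the maximal Perron entry of $A_{ex}(T)$ sits at a non-leaf vertex.

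For comparison, the paper avoids local moves entirely. The upper bound follows from the sandwich $\lambda_1(G)\le\eta_1(G)\le\tfrac12\bigl(\tfrac{\Delta}{\delta}+\tfrac{\delta}{\Delta}\bigr)\lambda_1(G)$ (Lemma \ref{Le-2-4}) combined with $\lambda_1(T)\le\sqrt{n-1}$ for trees (Lemma \ref{Le-2-2}), which immediately gives $\eta_1(T)\le\tfrac12(n-1+\tfrac1{n-1})\sqrt{n-1}=\eta_1(S_n)$ with equality only for $S_n$. The lower bound uses Smith's classification of connected graphs with $\lambda_1\le 2$ (Lemma \ref{Le-2-3}) to reduce to finitely many exceptional families, handles those via the bound $\eta_1\ge F/M_1$ (Lemma \ref{Le-2-8}), and finishes by showing directly that $\phi(A_{ex}(P_n);x)>0$ for $x\ge 2$, i.e.\ $\eta_1(P_n)<2$. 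If you want to salvage your route you would need to prove the two monotonicity lemmas in full generality for $A_{ex}$, which appears to be at least as hard as the theorem itself; otherwise, the sandwich-plus-classification strategy is the workable one.
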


Furthermore, we determine the first five trees with the maximal extended spectral radius.

\begin{theorem}\label{T-1-3}
For any tree $T\in\mathcal{T}_{n}\backslash\{S_n,T^1_n,T^2_n,T^3_n,T^4_n\}$ with $n\geq 12$, we have
$\eta_1(T)<\eta_1(T^4_n)<\eta_1(T^3_n)<\eta_1(T^2_n)<\eta_1(T^1_n)<\eta_1(S_n)$,
where $T^1_n$, $T^2_n$, $T^3_n$, and $T^4_n$ are the trees shown in Figure \ref{fig-1}.
\end{theorem}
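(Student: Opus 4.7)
The plan is to combine direct eigenvalue computations for the five candidate trees with a graph-transformation argument showing that every other tree has strictly smaller extended spectral radius than $T^4_n$.

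For the strict chain among $S_n,T^1_n,T^2_n,T^3_n,T^4_n$, I would exploit the high symmetry of each candidate: every such tree has a vertex of degree close to $n-1$ and only a few nontrivial branches, so the Perron eigenvector is constant on orbits of the obvious equitable partition. Quotienting by this partition reduces the eigenvalue problem to a low-dimensional matrix whose characteristic polynomial is an explicit polynomial in $n$ and $\eta$. Pairwise comparisons — equivalently, sign checks of the characteristic polynomial of one tree at the extended spectral radius of another, valid for $n\geq 12$ — then yield the chain $\eta_1(T^4_n)<\eta_1(T^3_n)<\eta_1(T^2_n)<\eta_1(T^1_n)<\eta_1(S_n)$.

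For the main inequality, I would formulate a grafting lemma in the same spirit as the one used implicitly to prove Theorem \ref{T-1-2}: if $T$ contains a pendant, or a short subtree, attached far from the vertex of maximum degree, then relocating it closer to that vertex strictly increases $\eta_1$. Iterating such moves on any $T\in\mathcal{T}_n\setminus\{S_n,T^1_n,T^2_n,T^3_n,T^4_n\}$ — each step strictly raising $\eta_1$ — reduces $T$ to a tree of very small diameter with a constrained degree sequence. A finite case analysis on these terminal trees, organized by $\Delta(T)$ and the shape of the branches at the maximum-degree vertex, then shows that each of them is either one of $T^1_n,\dots,T^4_n$ or still admits one more $\eta_1$-increasing move.

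The hard part will be the grafting step itself. The edge weights $\frac{1}{2}(d_i/d_j+d_j/d_i)$ depend nonlinearly on both endpoint degrees, so any local surgery simultaneously perturbs every edge incident to a vertex whose degree changes, and some of these weights may decrease even when the overall move should raise $\eta_1$. Controlling the sign of $\eta_1(T')-\eta_1(T)$ will require a careful Rayleigh-quotient comparison $x^{\top}A_{ex}(T')x\geq x^{\top}A_{ex}(T)x$, where $x$ is the Perron eigenvector of $T$, combined with the monotonicity of the scalar function $t\mapsto\frac{1}{2}(t+1/t)$ on $[1,\infty)$ and careful comparisons of Perron entries at vertices at different distances from the maximum-degree vertex. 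As a safety net, trees with $\Delta(T)$ well below $n-1$ should be ruled out wholesale by a cruder row-sum or Rayleigh-quotient upper bound on $\eta_1$ in terms of $\Delta(T)$, thereby avoiding any surgery for that regime and reducing the delicate grafting analysis to a small band of near-star degrees.
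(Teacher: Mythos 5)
Your treatment of the five candidate trees is essentially the paper's: it also reduces each $T_n^i$ to an explicit low-degree factor of the characteristic polynomial (via an edge-decomposition formula for $\phi(A_{ex};x)$ rather than an equitable partition, but to the same effect) and orders them by sign checks of one tree's polynomial at another's spectral radius. The gap is in the main inequality $\eta_1(T)<\eta_1(T_n^4)$ for all remaining trees, where your plan rests on a grafting lemma that you do not prove and that is genuinely doubtful for the extended adjacency matrix. As you yourself note, a local move changes the degree of at least one vertex and hence perturbs \emph{every} weight $\frac{1}{2}(d_i/d_j+d_j/d_i)$ on edges incident to it, in both directions; the paper explicitly remarks (Section 4) that even adding an edge can \emph{decrease} $\eta_1$, which is strong evidence that no clean monotone surgery lemma is available. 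Writing down the Rayleigh-quotient comparison $x^{\top}A_{ex}(T')x\geq x^{\top}A_{ex}(T)x$ is the entire difficulty, not a routine verification, and without it the reduction of an arbitrary tree to the five candidates does not go through.

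Your proposed safety net does not rescue this. A row-sum bound gives only $\eta_1(T)\leq\frac{1}{2}(\Delta^2+1)$, which falls below the target $\frac{1}{2}(n-3)\sqrt{n-5}\sim\frac{1}{2}n^{3/2}$ only when $\Delta=O(n^{3/4})$; and the sharper sandwich $\eta_1(T)\leq\frac{1}{2}(\Delta+\frac{1}{\Delta})\lambda_1(T)$ combined with the crude bound $\lambda_1(T)\leq\sqrt{n-1}$ already fails at $\Delta=n-4$, since $(n-4)^2(n-1)>(n-3)^2(n-5)$ for $n\geq 5$. So the ``small band of near-star degrees'' left to surgery is in fact almost the whole range $n^{3/4}\lesssim\Delta\leq n-4$. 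The missing idea is to feed the \emph{second-order} bound on the ordinary spectral radius into the sandwich: by the Neumaier/Chang--Huang ordering (Lemma \ref{Le-2-2}), every tree outside $\{S_n,T_n^1,\dots,T_n^4\}$ satisfies $\lambda_1(T)\leq\sqrt{\frac{1}{2}\big(n-1+\sqrt{n^2-14n+61}\big)}$, and since $\Delta(T)\geq n-3$ forces $T$ to be one of the five candidates, every remaining tree has $\Delta(T)\leq n-4$, whence $\eta_1(T)\leq\frac{1}{2}\big(n-4+\frac{1}{n-4}\big)\sqrt{\frac{1}{2}\big(n-1+\sqrt{n^2-14n+61}\big)}<\frac{1}{2}(n-3)\sqrt{n-5}<\eta_1(T_n^4)$, the last inequality coming from a principal-submatrix (interlacing) argument on $T_n^4$. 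This eliminates all surgery; you should replace the grafting step by this route or else actually prove the grafting lemma, which I do not believe can be done uniformly.
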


The proofs of Theorems \ref{T-1-2} and \ref{T-1-3} will be given in Section 3.

\begin{figure}[ht]
\centering
\includegraphics[width=0.9\textwidth]{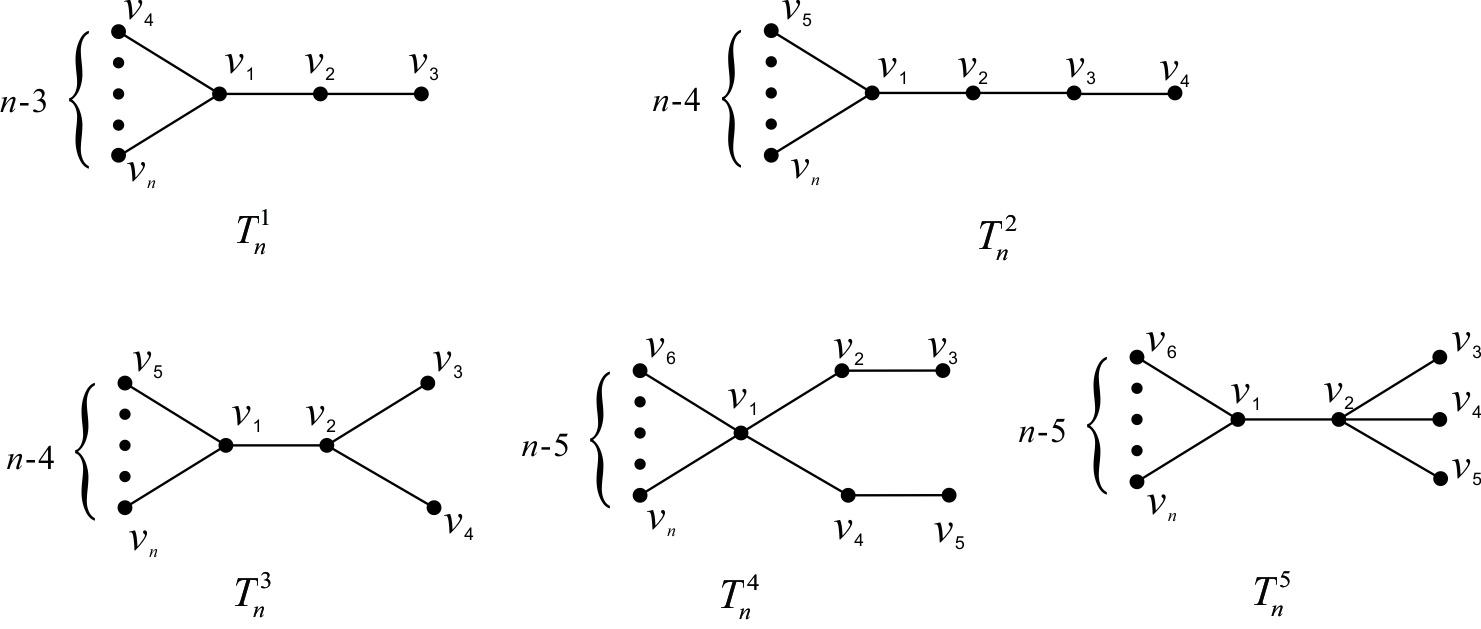}
\caption{The trees $T^i_n$ ($1\leq i\leq 5$).} \label{fig-1}
\end{figure}

\section{Preliminaries}

In this section, we shall present some results that will be needed to prove Theorems \ref{T-1-2} and \ref{T-1-3} in Section 3.

Let $\lambda_1(G)$ be the (ordinary) spectral radius of a graph $G$.
The following is a classical result concerning the maximal (ordinary) spectral radius of trees.

\begin{lemma}[\cite{A,Chang}] \label{Le-2-2}
Let $T\in \mathcal{T}_n$ with $n\geq 4$. Then
$$\lambda_1(T)\leq\sqrt{n-1},$$
and equality holds if and only if $T\cong S_n$.
Moreover, if $T\in\mathcal{T}_n\backslash\{S_n,T_n^1,T_n^2,T_n^3,T_n^4\}$ with $n\geq 11$, then
\begin{eqnarray*}
\lambda_1(T)\leq\sqrt{\frac{1}{2}\big(n-1+\sqrt{n^2-14n+61}\big)},
\end{eqnarray*}
and equality holds if and only if $T\cong T_{n}^{5}$ (depicted in Figure \ref{fig-1}).
\end{lemma}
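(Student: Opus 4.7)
My plan is to prove Lemma \ref{Le-2-2} in two stages, mirroring the two claims. For the first inequality, $\lambda_1(T)\le\sqrt{n-1}$, I would invoke Hong's classical bound $\lambda_1(G)^2\le 2m(G)-n+1$ for connected graphs (where $m(G)$ denotes the edge count). Since every tree of order $n$ has $m=n-1$, this immediately gives $\lambda_1(T)^2\le n-1$. The equality case of Hong's bound is known to hold only for complete graphs and stars, so among trees only $S_n$ attains equality, matching the direct computation $\lambda_1(K_{1,n-1})=\sqrt{n-1}$ via the spectrum $\{\pm\sqrt{n-1},0^{(n-2)}\}$.

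For the refined inequality, my plan is to combine an explicit computation of $\lambda_1(T_n^5)$ with a grafting-type argument ruling out all other trees. For the computation, I would exhibit an equitable partition of $T_n^5$ of small rank (derived from its automorphisms) and read off the Perron root from the characteristic polynomial of the resulting quotient matrix; after algebraic simplification, this root equals $\sqrt{(n-1+\sqrt{n^2-14n+61})/2}$. For the grafting step, I would use the following standard tool: if $x$ is the Perron eigenvector of a tree $T$ and a pendant subtree rooted at $u$ is detached and re-attached at a vertex $v$ with $x_v>x_u$, then the resulting tree $T'$ is still a tree and satisfies $\lambda_1(T')>\lambda_1(T)$. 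Applying this operation iteratively, any $T\in\mathcal{T}_n\setminus\{S_n,T_n^1,\dots,T_n^5\}$ should be shown to be transformable, with strictly increasing spectral radius at each step, into a tree in the exceptional list; hence $\lambda_1(T)<\lambda_1(T_n^5)$.

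The main obstacle is to make the case analysis in the grafting step truly exhaustive, so that one can be sure the process always terminates within the five named trees. The standard strategy is to sort trees by diameter: one shows that every tree of diameter $\ge 5$ admits a grafting to a tree of smaller diameter with strictly larger $\lambda_1$, thereby reducing the problem to a finite family of trees of diameter at most $4$. Within that finite family, direct evaluation and comparison of characteristic polynomials at suitable points (for instance, at $\lambda_1(T_n^5)$) suffices both to identify the exceptional trees $S_n,T_n^1,\dots,T_n^5$ as the only ones exceeding the claimed bound, and to certify the strict inequality $\lambda_1(T)<\lambda_1(T_n^5)$ for all other $T$ when $n\ge 11$. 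The threshold $n\ge 11$ enters precisely at this stage, since for smaller $n$ some of the $T_n^i$ may coincide or the ordering among them may degenerate.
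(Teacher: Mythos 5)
A preliminary remark: the paper does not prove Lemma \ref{Le-2-2} at all --- it is imported as a known result from Neumaier \cite{A} and Chang--Huang \cite{Chang} --- so there is no internal argument to compare yours against; what follows assesses your proposal on its own terms. Your first stage is complete and correct: Hong's bound $\lambda_1(G)\le\sqrt{2m-n+1}$ with $m=n-1$ gives $\lambda_1(T)\le\sqrt{n-1}$, and the known equality case of that bound (complete graphs and stars) settles the characterization among trees. Your identification of $T_n^5$ is also consistent: it is the double star with branch sizes $3$ and $n-5$, whose quotient matrix yields $x^4-(n-1)x^2+3(n-5)=0$ and hence the stated root $\sqrt{\tfrac12\big(n-1+\sqrt{n^2-14n+61}\big)}$.

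The second stage, however, is a program rather than a proof, and the gap sits exactly where the content of the Chang--Huang theorem lies. Two concrete points. First, reducing to trees of diameter at most $4$ does not leave a family of bounded size: for each $n$ this class still contains all double stars $S(a,b)$ with $a+b=n-2$ and all depth-two spiders, a collection growing with $n$, so ``direct evaluation and comparison of characteristic polynomials'' over it is not a finite check but requires a further grafting or degree-sequence argument uniform in $n$ (this is precisely the bulk of \cite{Chang}). Second, both the reduction step (``every tree of diameter $\ge 5$ grafts to one of smaller diameter with strictly larger $\lambda_1$'') and the branch-moving lemma are asserted without the care they need: the standard Li--Feng-type statement gives strictness only under a nondegeneracy hypothesis, and one must ensure at each step that the grafting does not land inside the exceptional set $\{S_n,T_n^1,\dots,T_n^5\}$ prematurely, or else handle that case separately, for the induction to close. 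As written, the exhaustive case analysis certifying $\lambda_1(T)<\lambda_1(T_n^5)$ for every remaining $T$ with $n\ge 11$ is acknowledged as ``the main obstacle'' but not carried out, so the second half of the lemma is not actually established.
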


The next result, due to Smith \cite{smith}, provides a complete characterization of connected graphs with $\lambda_1(G)\leq 2$.
\begin{lemma}[\cite{smith}]\label{Le-2-3}
Let $G$ be a connected graph on $n$ vertices with $n\geq 5$.
Then $\lambda_1(G)\leq2$ if and only if $G$ is one of the graphs:
$C_n, P_n, Z_n, W_n, H_1, H_2$, $H_3, H_4, H_5$ and $H_6$ (depicted in Figure \ref{fig-2}).
\end{lemma}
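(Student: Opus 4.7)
The plan is to establish both directions via Perron--Frobenius monotonicity, together with explicit eigenvector constructions for the boundary cases.

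For the easy direction (each listed graph has $\lambda_1\leq 2$), I would treat $P_n$ and $C_n$ with the classical spectral formulas $\lambda_1(P_n)=2\cos(\pi/(n+1))<2$ and $\lambda_1(C_n)=2$. The remaining graphs $Z_n$, $W_n$, $H_1,\dots,H_6$ are precisely the affine/extended Dynkin diagrams of types $\widetilde{A}$, $\widetilde{D}$, $\widetilde{E}_6$, $\widetilde{E}_7$, $\widetilde{E}_8$, and for each of them I would write down an explicit positive function $f\colon V(G)\to\mathbb{R}_{>0}$ satisfying $(Af)(v)=2f(v)$ at every vertex. Exhibiting this Perron $2$-eigenvector gives $\lambda_1(G)=2$ at once.

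For the converse, I would argue contrapositively: suppose $G$ is connected and not in the list, and aim to show $\lambda_1(G)>2$. The central tool is the strict monotonicity principle: if $H$ is a proper induced subgraph of a connected graph $G$, then $\lambda_1(H)<\lambda_1(G)$. Consequently it suffices to produce a single \emph{forbidden} subgraph of $G$ with $\lambda_1>2$. Natural forbidden candidates are the configurations that lie just outside the affine Dynkin list: $K_{1,5}$ (with $\lambda_1=\sqrt{5}$), the spiders $S(1,2,6)$, $S(1,3,4)$, $S(2,2,3)$, and certain small unicyclic graphs obtained from a cycle by attaching a pair of pendants or an extra branch. For each such candidate I would verify $\lambda_1>2$ by choosing a positive test vector $x$ with $(Ax)_v\geq 2x_v$ everywhere and strict at some vertex, hence $\lambda_1\geq (Ax)_v/x_v>2$.

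The structural case analysis itself I would drive by the estimate $\sqrt{\Delta(G)}\leq\lambda_1(G)\leq 2$, which forces $\Delta(G)\leq 4$, and by separating the tree case from the unicyclic case (two independent cycles already yield $\lambda_1>2$ via a theta-like subgraph). In the tree case I would analyze by the number and relative position of branch vertices (degree $\geq 3$): two branch vertices sufficiently far apart, or any spider with legs outside a short explicit list, contain one of the forbidden configurations above; otherwise $G$ coincides with $P_n$, $Z_n$, $W_n$, or one of the $H_i$. The unicyclic case is handled by examining the possible tree attachments to the unique cycle. The principal obstacle is not any single eigenvalue computation, but the bookkeeping required to ensure the forbidden list is simultaneously \emph{sufficient} (every connected graph outside Smith's list really contains one) and \emph{sound} (every forbidden candidate really has $\lambda_1>2$); the borderline spiders and short unicyclic graphs are the most delicate points.
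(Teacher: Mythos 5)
The paper does not prove this lemma at all: it is Smith's classical 1970 classification, quoted verbatim from \cite{smith} (the graphs in Figure \ref{fig-2} are the simply-laced Dynkin and extended Dynkin diagrams). So there is no internal proof to compare against; your proposal can only be measured against the standard literature argument, which is indeed the Dynkin-diagram/forbidden-subgraph strategy you describe.

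Within that strategy there is one genuine error. You assert that $Z_n, W_n, H_1,\dots,H_6$ ``are precisely the affine/extended Dynkin diagrams'' and that each admits a positive vector $f$ with $(Af)(v)=2f(v)$. That is false for half of the list: $Z_n$ is the finite-type diagram $D_n$ (a path with a single fork), and three of the $H_i$ are $E_6$, $E_7$, $E_8$; these have $\lambda_1<2$, so no positive $2$-eigenvector exists and the step as written fails. (You can see this already in the paper's own Figure \ref{fig-2} conventions: only $C_n$, $W_n=\widetilde D$, and the three spiders $S(2,2,2)$, $S(1,3,3)$, $S(1,2,5)$ are affine.) The repair is standard and you already have the needed tool: each finite-type diagram is a proper subgraph of the corresponding affine one, so strict monotonicity gives $\lambda_1<2$ for it. Beyond that, your converse direction is a correct skeleton --- $\sqrt{\Delta}\le\lambda_1$ forces $\Delta\le 4$, the minimal forbidden trees are exactly $K_{1,5}$ and the spiders $S(2,2,3)$, $S(1,3,4)$, $S(1,2,6)$ properly containing $\widetilde E_6$, $\widetilde E_7$, $\widetilde E_8$, and a graph with $\Delta=4$ other than $K_{1,4}$ properly contains $K_{1,4}$ --- but the case analysis is only sketched, and the cyclic case is simpler than you make it: any connected graph properly containing a cycle already has $\lambda_1>\lambda_1(C_k)=2$ by monotonicity, so no separate analysis of ``tree attachments to the unique cycle'' or of theta-subgraphs is needed. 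One further caution: at $n=5$ the star $K_{1,4}$ has $\lambda_1=2$, so the lemma's list is complete only if the paper's $W_n$ is understood to degenerate to $K_{1,4}$ there; any careful proof must account for this boundary case.
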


\begin{figure}[H]
\centering
\includegraphics[width=0.9\textwidth]{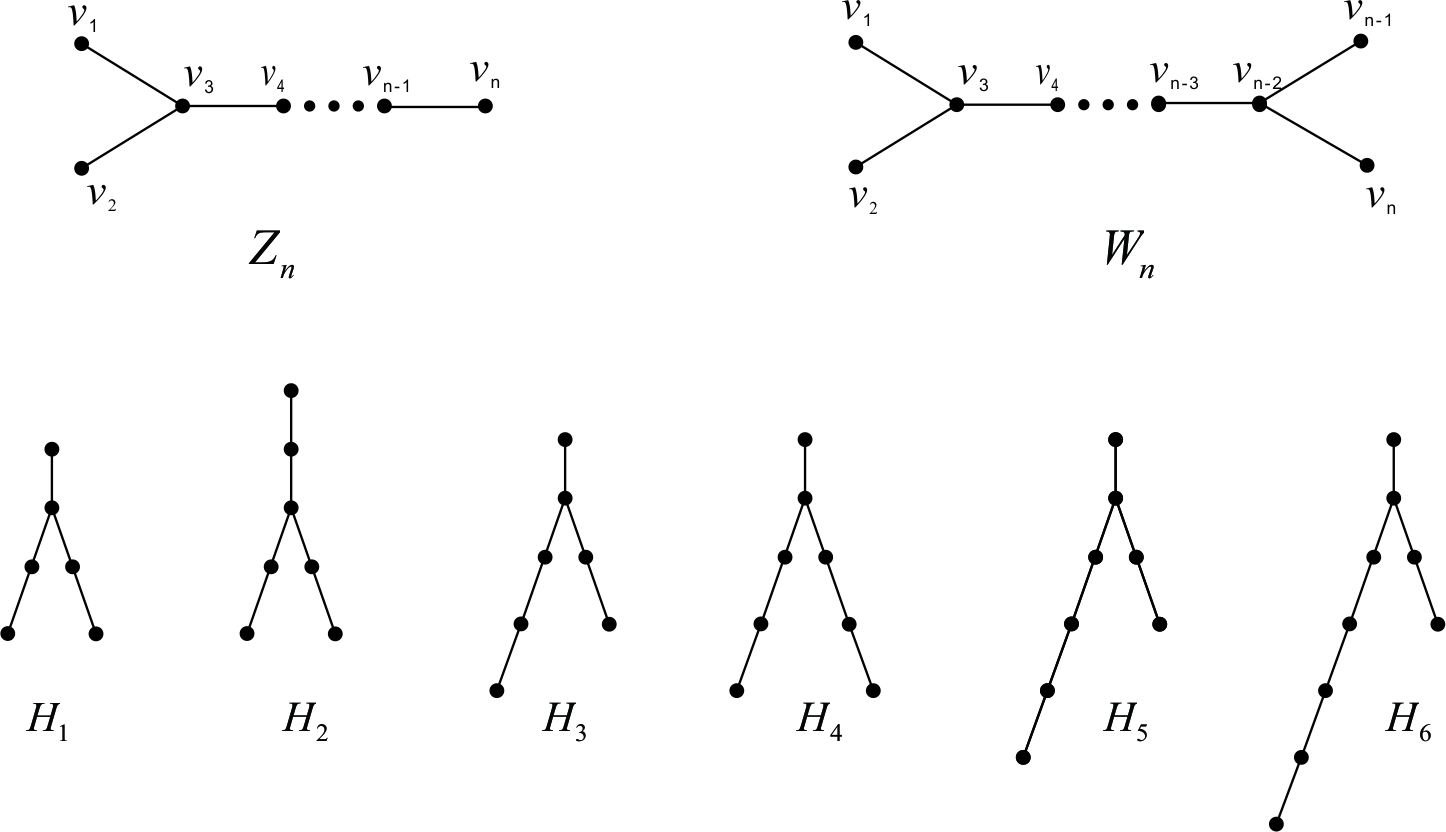}
\caption{The graphs $Z_n,W_n,H_1,H_2,H_3,H_4,H_5$, and $H_6$.} \label{fig-2}
\end{figure}

As observed in the introduction section, the extended adjacency matrix relates closely with the ordinary one,
so are their spectral radii, just as shown in the following result.

\begin{lemma}[\cite{Ghorbani}]\label{Le-2-4}
For any graph $G$ (with $\delta(G)\geq 1$), we have
\begin{eqnarray*}
\lambda_1(G)\leq\eta_1(G)\leq\frac{1}{2}\bigg(\frac{\Delta(G)}{\delta(G)}+\frac{\delta(G)}{\Delta(G)}\bigg)\lambda_1(G).
\end{eqnarray*}
The left equality holds if and only if $G$ is a regular graph and
the right equality holds if and only if $G$ is a regular graph or a bipartite semi-regular graph.
\footnote{The original statement on the right equality is that it holds if and only if $G$ is a complete bipartite graph, but this is not the case.}
\end{lemma}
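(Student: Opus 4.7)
The plan is to derive both inequalities from an entry-wise comparison of the symmetric non-negative matrices $A(G)$, $A_{ex}(G)$, and $M\cdot A(G)$, with $M:=\frac{1}{2}\bigl(\frac{\Delta(G)}{\delta(G)}+\frac{\delta(G)}{\Delta(G)}\bigr)$, and then to read off the equality cases using the strict positivity of the Perron eigenvectors.

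First I would observe that, for every edge $v_iv_j\in E(G)$, the ratio $d_i/d_j$ lies in $[\delta/\Delta,\Delta/\delta]$. Since $t\mapsto t+1/t$ is convex on $(0,\infty)$ and attains its maximum on any closed subinterval at an endpoint, this gives
$$
1\;\leq\;\tfrac{1}{2}\bigl(\tfrac{d_i}{d_j}+\tfrac{d_j}{d_i}\bigr)\;\leq\;M,
$$
the left bound being AM--GM. Entry-wise, this reads $0\leq A(G)\leq A_{ex}(G)\leq M\cdot A(G)$. I would then pick Perron eigenvectors $\mathbf{x}>0$ of $A(G)$ and $\mathbf{y}>0$ of $A_{ex}(G)$ (we may assume $G$ is connected, the general case reducing componentwise) and apply the Rayleigh quotient characterization to obtain
$$
\lambda_1(G)=\frac{\mathbf{x}^\top A(G)\mathbf{x}}{\mathbf{x}^\top\mathbf{x}}\leq \frac{\mathbf{x}^\top A_{ex}(G)\mathbf{x}}{\mathbf{x}^\top\mathbf{x}}\leq \eta_1(G),
$$
and, symmetrically, $\eta_1(G)=\frac{\mathbf{y}^\top A_{ex}(G)\mathbf{y}}{\mathbf{y}^\top\mathbf{y}}\leq M\cdot\frac{\mathbf{y}^\top A(G)\mathbf{y}}{\mathbf{y}^\top\mathbf{y}}\leq M\,\lambda_1(G)$.

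For the equality discussion, the strict positivity of $\mathbf{x}$ and $\mathbf{y}$ lets one equate the slack edge by edge. Equality on the left forces $d_i=d_j$ on every edge, so $G$ is regular by connectedness. Equality on the right forces $\tfrac{1}{2}(d_i/d_j+d_j/d_i)=M$ on every edge; the only pairs $(d_i,d_j)\in[\delta,\Delta]^2$ saturating the value $M$ of $t+1/t$ on $[\delta/\Delta,\Delta/\delta]$ are $(\delta,\Delta)$ and $(\Delta,\delta)$ (or any pair if $\delta=\Delta$). Combined with $\delta(G)\geq 1$, this forces $G$ to be either regular or bipartite semi-regular with parts of degrees $\delta$ and $\Delta$. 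Conversely, in both extremal cases $A_{ex}(G)=M\cdot A(G)$, so equality holds throughout.

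The one subtle point, which will need careful phrasing, is that the upper-bound equality actually requires two conditions simultaneously: the AM--GM inequalities must saturate on every edge, and $\mathbf{y}$ must also be a top eigenvector of $A(G)$. Both are automatic in the two extremal cases because then $A_{ex}(G)$ is a positive scalar multiple of $A(G)$; this is also precisely why the ``complete bipartite'' characterization flagged in the footnote is too restrictive---any bipartite semi-regular graph already forces the identity $A_{ex}(G)=M\cdot A(G)$, and hence equality on the right.
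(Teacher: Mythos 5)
The paper does not prove this lemma --- it is quoted from reference \cite{Ghorbani}, with only a footnote correcting the characterization of the right equality case --- so there is no in-paper argument to compare against. Your proof is correct and self-contained: the entrywise sandwich $A(G)\leq A_{ex}(G)\leq M\cdot A(G)$ (AM--GM on the left, the endpoint behaviour of $t+1/t$ on $[\delta/\Delta,\Delta/\delta]$ on the right) combined with Rayleigh quotients at the strictly positive Perron vectors gives both bounds, and the observation that a nonnegative symmetric matrix with vanishing quadratic form at a positive vector must be zero correctly pins down the equality cases edge by edge. In particular, your argument yields exactly the corrected statement flagged in the paper's footnote: saturation on the right forces every edge to join a degree-$\Delta$ vertex to a degree-$\delta$ vertex, hence $G$ is regular or bipartite semi-regular, and conversely either case gives $A_{ex}(G)=M\cdot A(G)$, so completeness of the bipartite graph is irrelevant. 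One small caveat: the ``componentwise'' reduction is fine for the inequalities but not for the equality characterizations (e.g.\ $K_3\cup K_2$ has $\lambda_1=\eta_1=2$ without being regular); this is harmless here because the paper assumes $G$ connected throughout, but you should either state that hypothesis explicitly or drop the reduction.
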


For any graph $G$, let
$$M_1(G):=\sum_{v_i\in V(G)}d_i^2\quad \mbox{and} \quad F(G):=\sum_{v_i\in V(G)}d_i^3,$$
where $M_1(G)$ and $F(G)$ are named the first Zagreb index \cite{Gutman} and the forgotten index \cite{Furtula} of the graph $G$, respectively.
These two indices can be used to bound below the extended spectral radius of a graph.

\begin{lemma}[\cite{Ghorbani}]\label{Le-2-8}
For any graph $G$ (with $\delta(G)\geq 1$), we have
$\eta_1(G)\geq\frac{F(G)}{M_1(G)}$.
\end{lemma}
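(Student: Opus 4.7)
The plan is to apply the Rayleigh--Ritz variational characterization of the largest eigenvalue of a real symmetric matrix, with the degree sequence used as a test vector. Since $A_{ex}(G)$ is symmetric, we have
\begin{equation*}
\eta_1(G) = \max_{x \neq 0} \frac{x^\top A_{ex}(G)\, x}{x^\top x},
\end{equation*}
so to obtain the desired lower bound it suffices to exhibit a single nonzero vector $x$ for which the Rayleigh quotient equals $F(G)/M_1(G)$.

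The natural choice is $x = (d_1, d_2, \ldots, d_n)^\top$, which is well-defined and nonzero thanks to the hypothesis $\delta(G) \geq 1$. The denominator is immediate: $x^\top x = \sum_{i=1}^{n} d_i^{\,2} = M_1(G)$.

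For the numerator, I would expand using the definition of $A_{ex}(G)$ and the symmetry of the sum over ordered adjacent pairs:
\begin{equation*}
x^\top A_{ex}(G)\, x \;=\; \sum_{v_i v_j \in E(G)} 2 \cdot \frac{1}{2}\!\left(\frac{d_i}{d_j} + \frac{d_j}{d_i}\right) d_i d_j \;=\; \sum_{v_i v_j \in E(G)} \left(d_i^{\,2} + d_j^{\,2}\right).
\end{equation*}
A standard handshake-style double count now finishes the job: each term $d_i^{\,2}$ appears in the right-hand sum once per edge incident to $v_i$, i.e.\ $d_i$ times, giving
\begin{equation*}
\sum_{v_i v_j \in E(G)} \left(d_i^{\,2} + d_j^{\,2}\right) = \sum_{v_i \in V(G)} d_i \cdot d_i^{\,2} = F(G).
\end{equation*}
Combining the two computations yields $\eta_1(G) \geq x^\top A_{ex}(G)\, x / x^\top x = F(G)/M_1(G)$.

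There is essentially no obstacle here: once the test vector is guessed correctly, everything reduces to a one-line Rayleigh quotient computation and a routine double count. The only subtlety worth noting is the role of $\delta(G) \geq 1$, which ensures $x \neq 0$ and also that $M_1(G) > 0$ so that the ratio is defined.
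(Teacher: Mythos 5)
Your proof is correct: the Rayleigh quotient of $A_{ex}(G)$ at the degree vector $(d_1,\dots,d_n)^\top$ is exactly $F(G)/M_1(G)$, and both the numerator identity $\sum_{v_iv_j\in E(G)}(d_i^2+d_j^2)=\sum_{v_i\in V(G)}d_i^3$ and the denominator $\sum_i d_i^2=M_1(G)$ check out. The paper gives no proof of this lemma---it is imported from \cite{Ghorbani}---and your argument is precisely the standard derivation used there, so there is nothing to compare beyond noting that your write-up is a correct, self-contained version of the cited result.
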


Let $\phi(B;x)$ 
be the characteristic polynomial of a (squared) matrix $B$.
If $H$ is an induced subgraph of a graph $G$, then $A_{ex}^{H}(G)$ will denote the principle submatrix of $A_{ex}(G)$,
which consists of the rows and columns corresponding to the vertices in $V(H)$.
Notice that $A_{ex}^H(G)$ may be different from $A_{ex}(H)$.

For two disjoint graphs $G_1$ and $G_2$ with $u\in V(G_1)$ and $v\in V(G_2)$,
we denote by $G_1 \cup G_2$ the disjoint union of $G_1$ and $G_2$, and by
$G_1(u,v)G_2$ the graph obtained from $G_1 \cup G_2$ by adding edge $uv$.
The next result can be used to simplify the calculation of $\phi(A_{ex}(G);x)$ in some cases (e.g., trees),
which follows from the same arguments as Lemmas 2.1 and 2.2 in \cite{Lin}; here we omit its proof.

\begin{lemma}\label{Le-2-11}

(i) If $G:=G_1 \cup G_2$, then
$$\phi(A_{ex}(G);x)=\phi(A_{ex}^{G_1}(G);x)\phi(A_{ex}^{G_2}(G);x).$$

(ii) If $G:=G_1(u,v)G_2$ and $f(d_u, d_v)=\frac{1}{2}(\frac{d_u}{d_v}+\frac{d_v}{d_u})$, then
\begin{eqnarray*}
\phi(A_{ex}(G);x)&=&\phi(A_{ex}^{G_1}(G);x)\phi(A_{ex}^{G_2}(G);x)\\
&&-f^2(d_u, d_v)\phi(A_{ex}^{G_1-u}(G);x)\phi(A_{ex}^{G_2-v}(G);x).
\end{eqnarray*}
\end{lemma}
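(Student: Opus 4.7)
For part (i), the key observation is that since $G_1$ and $G_2$ share no vertices and no edges run between them in $G = G_1 \cup G_2$, the matrix $A_{ex}(G)$ is block diagonal with respect to the partition $V(G) = V(G_1) \cup V(G_2)$. The two diagonal blocks are exactly $A_{ex}^{G_1}(G)$ and $A_{ex}^{G_2}(G)$, and the characteristic polynomial of a block-diagonal matrix factors as the product of the characteristic polynomials of its blocks. This immediately yields the stated identity.

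For part (ii), my plan is to apply the Leibniz (permutation) expansion to $\det(xI - A_{ex}(G))$. Writing $A_{ex}(G)$ in block form with respect to $V(G_1) \cup V(G_2)$,
$$A_{ex}(G) = \begin{pmatrix} A_{ex}^{G_1}(G) & B \\ B^{\top} & A_{ex}^{G_2}(G) \end{pmatrix},$$
the off-diagonal block $B$ has a single nonzero entry $f(d_u, d_v)$ located at the $(u, v)$ position, since $uv$ is the only edge between $V(G_1)$ and $V(G_2)$ in $G$; similarly $B^{\top}$ has exactly one nonzero entry $f(d_u, d_v)$ at $(v, u)$. Note also that removing the row and column indexed by $u$ from the top-left diagonal block yields $A_{ex}^{G_1 - u}(G)$, and analogously for $v$ on the other side.

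I would then classify the nonzero terms in the Leibniz sum according to whether the contributing permutation $\pi$ of $V(G)$ preserves the partition $V(G_1) \cup V(G_2)$ or not. The partition-preserving permutations contribute exactly $\phi(A_{ex}^{G_1}(G); x)\,\phi(A_{ex}^{G_2}(G); x)$ by the argument of part (i) applied to each block separately. For the non-partition-preserving permutations, every nonzero contribution must include exactly the transposition $u \leftrightarrow v$ and otherwise restrict to permutations of $V(G_1) \setminus \{u\}$ and $V(G_2) \setminus \{v\}$; these contribute an extra sign $-1$ from the single transposition, together with a factor $f^2(d_u, d_v)$ from the two off-diagonal entries, producing the term $-f^2(d_u, d_v)\,\phi(A_{ex}^{G_1 - u}(G); x)\,\phi(A_{ex}^{G_2 - v}(G); x)$. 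Summing the two cases yields the desired formula.

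The delicate point is the combinatorial claim that any non-partition-preserving permutation must be exactly the transposition $u \leftrightarrow v$ on the crossing part: because $B$ has only one nonzero entry, if any row indexed by $V(G_1)$ is mapped to a column indexed by $V(G_2)$, that row must be $u$ and the target column must be $v$; a cardinality-and-exhaustion argument on the remaining rows and columns in $V(G_2)$ then forces $v$ to map back to $u$. Beyond verifying this step and keeping track of the sign of the single transposition, the remainder is a routine block-determinant computation, which is in line with the authors' remark that this follows the same pattern as Lemmas 2.1 and 2.2 in \cite{Lin}.
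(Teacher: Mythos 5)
Your proposal is correct: part (i) is the standard block-diagonal factorization, and in part (ii) the Leibniz-expansion argument is sound --- the single nonzero entry of $B$ forces any partition-crossing permutation to contain exactly the transposition $u\leftrightarrow v$, and the signs work out since the two off-diagonal entries of $xI-A_{ex}(G)$ contribute $(-f)(-f)=f^2$ while the transposition contributes the factor $-1$. The paper omits its own proof, deferring to Lemmas 2.1 and 2.2 of \cite{Lin}, and your determinant expansion is essentially that same standard argument, correctly adapted to the convention that $A_{ex}^{H}(G)$ uses degrees computed in $G$.
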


\section{The proofs}

In this section, we shall present the proofs of Theorems \ref{T-1-2} and \ref{T-1-3}.

\vspace*{2mm}
\noindent
{\bf Proof of Theorem \ref{T-1-2}.}
We first prove the upper bound.
Let $T \in\mathcal{T} _{n}$ with $n\geq 5$.
Clearly, $\Delta(T)\leq n-1$ and $\delta(T)\geq 1$.
It is also easy to check that the function $x+\frac{1}{x}$ strictly increases with respect to $x$ when $x\geq 1$.
Thus, by Lemmas \ref{Le-2-4} and \ref{Le-2-2}, we have
$$\eta_1(T)\leq\frac{1}{2}\left(\frac{\Delta(T)}{\delta(T)}+\frac{\delta(T)}{\Delta(T)}\right)\lambda_1(T)\leq\frac{1}{2}\left(n-1+\frac{1}{n-1}\right)\sqrt{n-1}=\eta_1(S_n),$$
with equality if and only if $\Delta(T)=n-1$, $\delta(T)=1$, and $\lambda_1(T)=\sqrt{n-1}$, that is, $T\cong S_n$.

We next prove the lower bound.
Let $T\in\mathcal{T} _{n}\backslash\{P_n, Z_n, W_n, H_1, H_2, H_3, H_4,$ $H_5, H_6\}$.
By Lemmas \ref{Le-2-4} and \ref{Le-2-3}, we get
$\eta_1(T)\geq\lambda_1(T)>2.$
On the other hand, by Lemma \ref{Le-2-8}, we obtain
\begin{eqnarray*}
&&\eta_1(Z_n)\geq\frac{30+8(n-4)}{12+4(n-4)}>2,~
\eta_1(W_n)\geq\frac{58+8(n-6)}{22+4(n-6)}>2,\\
&&\eta_1(H_1)\geq\frac{46}{20}>2,~\eta_1(H_2)\geq\frac{54}{24}>2,~
\eta_1(H_3)\geq\frac{54}{24}>2,\\
&&\eta_1(H_4)\geq\frac{62}{28}>2,~
\eta_1(H_5)\geq\frac{62}{28}>2,~
\eta_1(H_6)\geq\frac{70}{32}>2.
\end{eqnarray*}
Now, in order to obtain the desired result, we just need to show that $\eta_1(P_n)<2$,
that is, $\phi(A_{ex}(P_n);x)>0$ when $x\geq 2$.

Indeed, it is known that (see e.g., \cite{Biggs}, p. 11)
\begin{eqnarray*}
\phi(A(P_n);x)=x\phi(A(P_{n-1});x)-\phi(A(P_{n-2});x),
\end{eqnarray*}
from which we can deduce that
\begin{eqnarray*}
\phi(A(P_{n});x)-\phi(A(P_{n-1});x)=(x-1)\phi(A(P_{n-1});x)-\phi(A(P_{n-2});x).
\end{eqnarray*}
Moreover, it is also known that $\lambda_1(P_n)=2\cos\frac{\pi}{n+1}<2$ (see e.g., \cite{Brouwer}, p. 9),
which yields that $\phi(A(P_{n});x)>0$ holds for $x\geq 2$. Thus, when $x\geq 2$, we get
\begin{eqnarray*}
\phi(A(P_{n});x)-\phi(A(P_{n-1});x) &\geq& \phi(A(P_{n-1});x)-\phi(A(P_{n-2});x)\\
&\geq& \phi(A(P_{n-2});x)-\phi(A(P_{n-3});x)\\
&\geq& \phi(A(P_{n-3});x)-\phi(A(P_{n-4});x)\\
&\geq& \cdots\\
&\geq& \phi(A(P_{2});x)-\phi(A(P_{1});x)\\
&=& x^2-x-1>0.
\end{eqnarray*}
Consequently, when $x\geq 2$, we obtain
\begin{eqnarray*}
&&\phi(A_{ex}(P_n);x)\\
&&~~~~=x^2\phi(A(P_{n-2});x)-2x\big(\frac{5}{4}\big)^2\phi(A(P_{n-3});x)+\big(\frac{5}{4}\big)^4\phi(A(P_{n-4});x)\\
&&~~~~\geq x^2\phi(A(P_{n-2});x)-\big[x^2+\big(\frac{5}{4}\big)^4\big]\phi(A(P_{n-3});x)+\big(\frac{5}{4}\big)^4\phi(A(P_{n-4});x)\\
&&~~~~=x^2\big[\phi(A(P_{n-2});x)-\phi(A(P_{n-3});x)\big]-\big(\frac{5}{4}\big)^4\big[\phi(A(P_{n-3});x)-\phi(A(P_{n-4});x)\big]\\
&&~~~~\geq \big[x^2-\big(\frac{5}{4}\big)^4\big]\big[\phi(A(P_{n-3});x)-\phi(A(P_{n-4});x)\big]>0,
\end{eqnarray*}
as required, completing the proof of Theorem \ref{T-1-2}.
\hfill $\square$

\vspace*{4mm}
\noindent
{\bf Proof of Theorem \ref{T-1-3}.}
For any tree $T$ with $n\geq 12$ and $\Delta(T)\leq n-4$,
by Lemmas \ref{Le-2-4} and \ref{Le-2-2} and the monotonicity of the function $x+\frac{1}{x}$, we obtain
\begin{eqnarray}
\eta_1(T)
&\leq&\frac{1}{2}\bigg(\frac{\Delta(T)}{\delta(T)}+\frac{\delta(T)}{\Delta(T)}\bigg)\lambda_1(T)\nonumber\\
&\leq&\frac{1}{2}\big(n-4+\frac{1}{n-4}\big)\sqrt{\frac{1}{2}\big(n-1+\sqrt{n^2-14n+61}\big)}\nonumber\\
&<&\frac{1}{2}(n-3)\sqrt{n-5}. \quad \textrm{(by Propositon \ref{prop-1} in Appendix)}\label{eq-1}
\end{eqnarray}

On the other hand, if $T$ is a tree with $n\geq 12$ and $\Delta(T)\geq n-3$,
then $T\in \{S_n, T_n^1, T_n^2, T_n^3,T_n^4\}$.
Furthermore, we have the following three claims.

\begin{claim} \label{clm-1}
If $n\geq 12$, then $\eta_1(T_n^1)>\eta_1(T_n^2)$.
\end{claim}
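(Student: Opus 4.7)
The plan is to compare the two extended spectral radii through their characteristic polynomials. Each of $T_n^1$ and $T_n^2$ has a single vertex of maximum degree ($n-2$ and $n-3$, respectively) surrounded by many pendant leaves, plus a short internal branch. This structure is tailor-made for Lemma \ref{Le-2-11}: part (i) immediately splits off each pendant edge attached to the central vertex as a trivial linear factor, while part (ii), applied a bounded number of times along the short internal path, reduces each $\phi(A_{ex}(T_n^i);x)$ to an explicit closed form $x^{a_i}\,p_i(x,n)$, where $p_i$ is a polynomial of small fixed degree in $x$ whose coefficients are rational functions of $n$ arising from the weights $\tfrac12(d_u/d_v+d_v/d_u)$ on the few non-pendant edges.

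With both characteristic polynomials in hand, I would reduce the inequality $\eta_1(T_n^1)>\eta_1(T_n^2)$ to a sign check. Because $A_{ex}(T_n^i)$ is irreducible, nonnegative, and symmetric, Perron--Frobenius guarantees that $\eta_1(T_n^i)$ is the simple largest root of $\phi(A_{ex}(T_n^i);x)$, with the polynomial strictly positive for $x>\eta_1(T_n^i)$ and strictly negative immediately below. The goal is therefore to verify $\phi(A_{ex}(T_n^1);\eta_1(T_n^2))<0$, together with the mild supplementary observation that $\eta_1(T_n^2)$ already exceeds every other eigenvalue of $A_{ex}(T_n^1)$ (routine since both Perron eigenvalues are of order $\sqrt{n}$, whereas the remaining eigenvalues of these near-star matrices are bounded by a small absolute constant, analogous to the spectrum of $S_n$). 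Rather than working with $\eta_1(T_n^2)$ directly, I would substitute a convenient lower bound such as $F(T_n^2)/M_1(T_n^2)$ supplied by Lemma \ref{Le-2-8}, and then carry the sign analysis through uniformly on a small interval containing $\eta_1(T_n^2)$.

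The main obstacle is arithmetic rather than conceptual. Clearing the denominators produced by the squared weights $f(d_u,d_v)^2$ in Lemma \ref{Le-2-11}(ii) yields expressions in $n$ whose careful assembly is error-prone, and the final step collapses to checking positivity of a single explicit polynomial in $n$ of small degree. The hypothesis $n\ge 12$ should enter precisely at this last step, guaranteeing that the dominant $n$-term overrides the finitely many lower-order corrections introduced by the pendant-subtraction terms in Lemma \ref{Le-2-11}(ii).
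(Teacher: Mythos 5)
Your overall strategy---compute both characteristic polynomials via Lemma \ref{Le-2-11} and show that the one for $T_n^1$ is negative at $\eta_1(T_n^2)$---is viable, and it is in fact exactly the technique the paper uses for Claims \ref{clm-2} and \ref{clm-3}. For Claim \ref{clm-1} itself, however, the paper takes a different route: it exhibits the explicit separating value $\tfrac12(n-2)\sqrt{n-3}$, proving $\eta_1(T_n^1)>\tfrac12(n-2)\sqrt{n-3}$ directly from the Perron eigenvector equations of $A_{ex}(T_n^1)$ (the characteristic polynomial of $T_n^1$ is never computed), and then proving $\eta_1(T_n^2)<\tfrac12(n-2)\sqrt{n-3}$ from the quartic factor $g_2$ of $\phi(A_{ex}(T_n^2);x)$. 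The paper's version keeps the arithmetic lighter by avoiding the quotient polynomial of $T_n^1$; yours is more uniform across the three claims.

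Two points on your execution. First, the ``supplementary observation'' that $\eta_1(T_n^2)$ exceeds the other eigenvalues of $A_{ex}(T_n^1)$ is unnecessary: $\phi(A_{ex}(T_n^1);x)$ is monic, hence positive for all $x$ greater than its largest real root, so $\phi(A_{ex}(T_n^1);t)<0$ already forces $t<\eta_1(T_n^1)$. Second, and more seriously, substituting only the lower bound $F(T_n^2)/M_1(T_n^2)$ for $\eta_1(T_n^2)$ proves nothing: negativity of $\phi(A_{ex}(T_n^1);\cdot)$ at a point \emph{below} $\eta_1(T_n^2)$ only shows that this point lies below $\eta_1(T_n^1)$. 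You must establish negativity at $\eta_1(T_n^2)$ itself, which requires either an accompanying upper bound on $\eta_1(T_n^2)$ (so the sign can be checked on a closed interval provably containing it), or---cleaner, and what the paper does in Claims \ref{clm-2} and \ref{clm-3}---eliminating the dominant $n$-dependent term via the defining relation $g_2(\eta_1(T_n^2))=0$. With that repair the argument goes through.
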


\begin{proof}
We first prove that $\eta_1(T_n^1)>\frac{1}{2}(n-2)\sqrt{n-3}$.
Label the vertices of $T_n^1$ as shown in Figure \ref{fig-1}.
Clearly, $A_{ex}(T_n^1)$ is a non-negative and irreducible matrix.
Thus, by the Perron-Frobenius theorem (see e.g., \cite{Brouwer}, p. 22), we know that $\eta_1(T_n^1)>0$
and there is a positive unit vector $\mathbf{x}=(x_1,x_2,x_3,x_4,\dots,x_n)^T$
such that $\eta_1(T_n^1) \mathbf{x}=A_{ex}(T_n^1)\mathbf{x}$.
This yields that $x_4=\cdots=x_n$ and
\begin{eqnarray*}
\left\{\begin{aligned}
\eta_1(T_n^1) x_1&=\frac{1}{2}\Big(\frac{2}{n-2}+\frac{n-2}{2}\Big) x_2+\frac{1}{2}(n-3)\Big(\frac{1}{n-2}+n-2\Big)x_4,\\
\eta_1(T_n^1) x_4&=\frac{1}{2}\Big(\frac{1}{n-2}+n-2\Big)x_1,
\end{aligned}\right .
\end{eqnarray*}
from which we can deduce that
\begin{eqnarray*}
\eta_1^2(T_n^1) x_1&>&\frac{1}{2}(n-3) \Big(\frac{1}{n-2}+n-2\Big)\eta_1(T_n^1) x_4\\
&=&(n-3)\bigg[\frac{1}{2}\Big(\frac{1}{n-2}+n-2\Big)\bigg]^2x_1,\\
&>&(n-3)\big[\frac{1}{2}(n-2)\big]^2x_1.
\end{eqnarray*}
Consequently, we have $\eta_1(T_n^1)>\frac{1}{2}(n-2)\sqrt{n-3}$, as desired.

Now, to obtain the required result, we just need to show that $\eta_1(T_n^2)<\frac{1}{2}(n-2)\sqrt{n-3}$.
Indeed, by Lemma \ref{Le-2-11} and some calculations, we get
$$\phi(A_{ex}(T_n^2);x)=x^{n-4}g_2(x),$$
where
\begin{eqnarray*}
g_2(x):=\bigg(x^2-\frac{(n-4)\big[(n-3)^2+1\big]^2}{4(n-3)^2}\bigg)\big(x^2-\frac{41}{16}\big)
-\frac{\big[(n-3)^2+4\big]^2}{16(n-3)^2}\big(x^2-\frac{25}{16}\big).
\end{eqnarray*}
For convenience, let $\eta:=\eta_1(T_n^2)$. Noting that $\eta^2-\frac{41}{16}>\frac{1}{2}(\eta^2-\frac{25}{16})$, we have
\begin{eqnarray*}
0=g_2(\eta)&=&\bigg(\eta^2-\frac{(n-4)\big[(n-3)^2+1\big]^2}{4(n-3)^2 }\bigg)\big(\eta^2-\frac{41}{16}\big)
-\frac{\big[(n-3)^2+4\big]^2}{16(n-3)^2}\big(\eta^2-\frac{25}{16}\big)\\
&>&\bigg(\eta^2-\frac{(n-4)\big[(n-3)^2+1\big]^2}{4(n-3)^2}-\frac{\big[(n-3)^2+4\big]^2}{8(n-3)^2}\bigg)\big(\eta^2-\frac{41}{16}\big)\\
&>&\bigg(\eta^2-\frac{(n-3)\big[(n-3)^2+1\big]^2}{4(n-3)^2}\bigg)\big(\eta^2-\frac{41}{16}\big).
\end{eqnarray*}
Consequently, since $\eta^2-\frac{41}{16}>0$ and $n\geq 12$, we get
\begin{eqnarray*}
\eta<\frac{1}{2}\bigg(n-3+\frac{1}{n-3}\bigg)\sqrt{n-3}<\frac{1}{2}(n-2)\sqrt{n-3},
\end{eqnarray*}
as desired, completing the proof of Claim \ref{clm-1}.
\end{proof}

\begin{claim}\label{clm-2}
If $n\geq 12$, then $\eta_1(T_n^2)>\eta_1(T_n^3)$.
\end{claim}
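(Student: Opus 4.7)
The strategy mirrors the proof of Claim \ref{clm-1}. First, I would apply Lemma \ref{Le-2-11} to $T_n^3$, splitting at the edge joining its degree-$(n-3)$ vertex to the adjacent degree-$3$ vertex. A direct computation then yields
\[
\phi(A_{ex}(T_n^3);x) = x^{n-4}g_3(x), \qquad g_3(x) = \left(x^2 - A\right)\left(x^2 - \tfrac{50}{9}\right) - C\,x^2,
\]
where $A := \frac{(n-4)[(n-3)^2+1]^2}{4(n-3)^2}$ is exactly the quantity already appearing in $g_2$ and $C := \frac{[(n-3)^2+9]^2}{36(n-3)^2}$ is the squared weight of the bridge edge. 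The quantity $B := \frac{[(n-3)^2+4]^2}{16(n-3)^2}$ from Claim \ref{clm-1} will also reappear below.

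Next, I would obtain a lower bound on $\eta_1(T_n^2)$ by applying the Perron-Frobenius theorem exactly as in Claim \ref{clm-1}. Writing $\eta := \eta_1(T_n^2)$ and eliminating the components of the Perron eigenvector attached to the $n-4$ symmetric pendants at the centre as well as those on the short attached path, the system collapses to the identity
\[
\eta^2 = A + \frac{B\,(\eta^2 - 25/16)}{\eta^2 - 41/16}.
\]
Since $\frac{\eta^2-25/16}{\eta^2-41/16} = 1 + \frac{1}{\eta^2-41/16} > 1$, this forces $\eta_1(T_n^2)^2 > A + B$.

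For the upper bound on $\eta_1(T_n^3)$, set $\tilde{\eta} := \eta_1(T_n^3)$. Rearranging $g_3(\tilde{\eta}) = 0$ gives $\tilde{\eta}^2 - A = \frac{C\,\tilde{\eta}^2}{\tilde{\eta}^2 - 50/9}$. Cauchy interlacing applied to the principal submatrix of $A_{ex}(T_n^3)$ indexed by the central vertex together with its $n-4$ pendant neighbours (a star-like matrix whose largest eigenvalue equals $\sqrt{A}$) yields $\tilde{\eta}^2 \geq A$; since $A$ comfortably exceeds $100/9$ for $n \geq 12$, we have $\tilde{\eta}^2 - 50/9 \geq \tilde{\eta}^2/2$, so the fraction is at most $2C$, whence $\eta_1(T_n^3)^2 \leq A + 2C$.

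Combining, the claim reduces to the inequality $B > 2C$ for $n \geq 12$, which, setting $m := (n-3)^2 \geq 81$, is equivalent to $9(m+4)^2 > 8(m+9)^2$, i.e.\ $m^2 - 72m - 504 > 0$; this is immediate since $m(m-72) \geq 81 \cdot 9 = 729 > 504$. The step I expect to be the most delicate is obtaining the clean intermediate bounds $\eta_1(T_n^2)^2 > A+B$ and $\eta_1(T_n^3)^2 \leq A+2C$ without accumulating lower-order error terms; once these are in place, the closing inequality $B > 2C$ is routine.
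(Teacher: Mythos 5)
Your proof is correct, and it rests on the same foundation as the paper's: the Lemma \ref{Le-2-11} factorizations $\phi(A_{ex}(T_n^2);x)=x^{n-4}g_2(x)$ and $\phi(A_{ex}(T_n^3);x)=x^{n-4}g_3(x)$, whose largest roots are the two extended spectral radii, together with the observation that $g_2$ and $g_3$ share the term $A=\frac{(n-4)[(n-3)^2+1]^2}{4(n-3)^2}$. Where you diverge is in how the comparison is closed. The paper takes $\eta:=\eta_1(T_n^3)$, uses $g_3(\eta)=0$ to replace $\eta^2-A$ by $\frac{C\eta^2}{\eta^2-50/9}$ inside $g_2(\eta)$, and shows $g_2(\eta)<0$ (hence $\eta<\eta_1(T_n^2)$); the inequality it needs, $\frac{C\eta^2}{\eta^2-50/9}\leq B$, is verified via the bound $\eta\geq 5$ coming from Lemma \ref{Le-2-8}. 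You instead sandwich the two radii between the explicit quantities $A+B$ and $A+2C$: the lower bound $\eta_1(T_n^2)^2>A+B$ falls out of $g_2(\eta_1(T_n^2))=0$, the upper bound $\eta_1(T_n^3)^2<A+2C$ uses Cauchy interlacing (giving $\eta_1(T_n^3)^2\geq A>100/9$) in place of Lemma \ref{Le-2-8}, and the closing inequality $B>2C$ is a one-line computation in $m=(n-3)^2$. Your version is somewhat more transparent and dispenses with the $F/M_1$ bound entirely, while the paper's version never needs a lower bound on $\eta_1(T_n^2)$ because it only evaluates $g_2$ at $\eta_1(T_n^3)$. Both arguments are complete; the only points to make explicit in your write-up are that $\eta_1(T_n^2)$ is genuinely a root of $g_2$ (it is positive, and the remaining factor of the characteristic polynomial is $x^{n-4}$) and that $\eta_1(T_n^2)^2>41/16$, which is what legitimizes the step $\frac{\eta^2-25/16}{\eta^2-41/16}>1$; both follow at once from $\eta_1(T_n^2)\geq\lambda_1(T_n^2)>2$.
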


\begin{proof}
By applying Lemma \ref{Le-2-11} and some calculations, we have
$$\phi(A_{ex}(T_n^3);x)=x^{n-4}g_3(x),$$
where
\begin{eqnarray*}
g_3(x):=\bigg(x^2-\frac{(n-4)\big[(n-3)^2+1\big]^2}{4(n-3)^2}\bigg)\big(x^2-\frac{50}{9}\big)
-\frac{\big[(n-3)^2+9\big]^2x^2}{36(n-3)^2}.
\end{eqnarray*}
For convenience, let $\eta:=\eta_1(T_n^3)$.
By Lemma \ref{Le-2-8} and the condition $n\geq 12$, we have $\eta\geq\frac{(n-3)^3+3^3+(n-2)}{(n-3)^2+3^2+(n-2)}\geq 5$.
Moreover, from $g_3(\eta)=0$ it follows that
\begin{eqnarray*}
\eta^2-\frac{(n-4)\big[(n-3)^2+1\big]^2}{4(n-3)^2}
=\frac{\big[(n-3)^2+9\big]^2\eta^2}{36(n-3)^2\big(\eta^2-\frac{50}{9}\big)},
\end{eqnarray*}
and hence,
\begin{eqnarray*}
g_2(\eta)
&=&\frac{\big[(n-3)^2+9\big]^2\eta^2}{36(n-3)^2\big(\eta^2-\frac{50}{9}\big)}\big(\eta^2-\frac{41}{16}\big)
-\frac{\big[(n-3)^2+4\big]^2}{16(n-3)^2}\big(\eta^2-\frac{25}{16}\big)\\
&<&\bigg(\frac{\big[(n-3)^2+9\big]^2\eta^2}{36(n-3)^2\big(\eta^2-\frac{50}{9}\big)}
-\frac{\big[(n-3)^2+4\big]^2}{16(n-3)^2}\bigg)\big(\eta^2-\frac{41}{16}\big).
\end{eqnarray*}

Now, to get the desired result, we just need to show that $g_2(\eta)<0$.
Since $\eta^2-\frac{41}{16}>0$, this is equivalent to
$$\frac{\big[(n-3)^2+9\big]^2\eta^2}{36(n-3)^2\big(\eta^2-\frac{50}{9}\big)}
\leq\frac{\big[(n-3)^2+4\big]^2}{16(n-3)^2},$$
that is,
$$\eta^2\geq\frac{50\big[(n-3)^2+4\big]^2}{9\big[(n-3)^2+4\big]^2-4\big[(n-3)^2+9\big]^2}=\frac{10(n^2-6n+13)^2}{(n^2-6n+15)(n^2-6n+3)},$$
which always holds for $n\geq 12$ and $\eta\geq 5$, as required.
\end{proof}

\begin{claim}\label{clm-3}
If $n\geq 12$, then $\eta_1(T_n^3)>\eta_1(T_n^4)>\frac{1}{2}(n-3)\sqrt{n-5}$.
\end{claim}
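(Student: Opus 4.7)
The plan is to follow the template of Claims~\ref{clm-1} and~\ref{clm-2}. First, I would apply Lemma~\ref{Le-2-11} along the high-degree vertex and the side arms of $T_n^4$ to obtain a factorization
$$\phi(A_{ex}(T_n^4);x)=x^{n-4}g_4(x),$$
where $g_4(x)$ is a low-degree polynomial (biquadratic in $x$) with positive leading coefficient, completely analogous to the $g_2,g_3$ appearing in the preceding two claims.

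To prove $\eta_1(T_n^3)>\eta_1(T_n^4)$, I would set $\eta:=\eta_1(T_n^4)$ and substitute $\eta$ into $g_3$. Using the identity $g_4(\eta)=0$ to rewrite $g_3(\eta)$ as a single rational expression---the same manoeuvre that Claim~\ref{clm-2} performed with $g_3(\eta)=0$ in order to simplify $g_2(\eta)$---the goal $g_3(\eta)<0$ should reduce to a clean rational inequality of the form $\eta^{2}\geq P(n)/Q(n)$ with $P,Q$ explicit polynomials in $n$. Since $g_3$ has positive leading coefficient and its largest positive root is $\eta_1(T_n^3)$, establishing $g_3(\eta)<0$ forces $\eta<\eta_1(T_n^3)$, as desired. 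Closing the rational comparison would be handled by the crude lower bound $\eta\geq F(T_n^4)/M_1(T_n^4)$ from Lemma~\ref{Le-2-8}, a quantity of order $n$ for $n\geq 12$.

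For the second half, $\eta_1(T_n^4)>\tfrac{1}{2}(n-3)\sqrt{n-5}$, I would adapt the Perron-eigenvector argument from Claim~\ref{clm-1}. Let $\mathbf{x}>0$ be the Perron eigenvector of $A_{ex}(T_n^4)$, so that $\eta\mathbf{x}=A_{ex}(T_n^4)\mathbf{x}$ with $\eta:=\eta_1(T_n^4)$. By symmetry the $n-5$ pendants of the high-degree vertex (of degree $n-3$) share a common coordinate $x_p$; writing $x_c$ for the coordinate at the high-degree vertex, the equation at a pendant gives $\eta x_p=\tfrac{1}{2}\bigl(\tfrac{1}{n-3}+(n-3)\bigr)x_c$, and substituting back into the equation at the high-degree vertex yields
$$\eta^{2}x_c=\eta\cdot(\text{strictly positive non-pendant contribution})+(n-5)\Bigl[\tfrac{1}{2}\bigl(\tfrac{1}{n-3}+(n-3)\bigr)\Bigr]^{2}x_c.$$
Dropping the first summand on the right and using $\tfrac{1}{n-3}+(n-3)>n-3$ gives $\eta^{2}>(n-5)\bigl(\tfrac{n-3}{2}\bigr)^{2}$, which is the claimed bound.

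The principal obstacle I anticipate lies in the algebraic bookkeeping of the first half. Because the ``star factor'' of $g_4$ (coming from the pendants of the high-degree vertex in $T_n^4$) will most likely have coefficient $n-5$ rather than the $n-4$ appearing in $g_3$, one cannot simply cancel a common factor as was done in Claim~\ref{clm-2}; a slightly more careful manipulation of $g_3(\eta)$ via $g_4(\eta)=0$ will be needed, and the resulting rational inequality may involve quadratic-in-$\eta^{2}$ pieces whose signs must be tracked with care. Nonetheless, combining the manipulation with the Lemma~\ref{Le-2-8} lower bound on $\eta$ and the hypothesis $n\geq 12$ should suffice to close the argument.
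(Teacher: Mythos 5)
Your proposal is essentially sound and, for the first half, coincides with the paper's argument: the paper indeed factors $\phi(A_{ex}(T_n^4);x)$ via Lemma~\ref{Le-2-11}, sets $\eta:=\eta_1(T_n^4)$, uses $g_4(\eta)=0$ to rewrite $g_3(\eta)$, and shows $g_3(\eta)<0$ by reducing to a rational inequality in $\eta^2$ that is closed with the Lemma~\ref{Le-2-8} bound (the paper only needs $\eta\geq 5$) and $n\geq 12$. Your anticipated obstacle is exactly the one the paper meets: the star factor of $g_4$ carries $n-5$ rather than $n-4$, and the paper handles it by adding and subtracting $\tfrac{[(n-3)^2+1]^2}{4(n-3)^2}$ so that the left-hand side of the $g_3$-identity reappears. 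One bookkeeping correction: because $T_n^4$ has two arms of length two, the factorization is $\phi(A_{ex}(T_n^4);x)=x^{n-6}g_4(x)$ with $g_4$ of degree six, of the form $(x^2-a)(x^2-\tfrac{25}{16})^2-c\,x^2(x^2-\tfrac{25}{16})$, not a biquadratic with prefactor $x^{n-4}$; this changes no step of substance since the common factor $x^2-\tfrac{25}{16}$ is positive at $\eta\geq 5$. Where you genuinely diverge is the second half: you bound $\eta_1(T_n^4)$ from below by the Perron-eigenvector computation at the degree-$(n-3)$ vertex and its $n-5$ pendants (the same device the paper uses in Claim~\ref{clm-1} for $T_n^1$), whereas the paper observes that $A_{ex}^{S_{n-4}}(T_n^4)=\tfrac{1}{2}\bigl(n-3+\tfrac{1}{n-3}\bigr)A(S_{n-4})$ is a principal submatrix and invokes eigenvalue interlacing. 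The two routes yield the identical bound $\eta_1(T_n^4)\geq\tfrac{1}{2}\bigl(n-3+\tfrac{1}{n-3}\bigr)\sqrt{n-5}>\tfrac{1}{2}(n-3)\sqrt{n-5}$; your version is self-contained and elementary, while the paper's is shorter once interlacing is available. The only thing your write-up still owes is the actual verification of the final rational inequality in the first half, which you defer with ``should suffice''; the paper's computation confirms it does.
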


\begin{proof}
We first prove that $\eta_1(T_n^3)>\eta_1(T_n^4)$.
As the proof of Claim \ref{clm-2}, by Lemma \ref{Le-2-11} and some calculations, we get
$$\phi(A_{ex}(T_n^4);x)=x^{n-6}g_4(x),$$
where
\begin{eqnarray*}
g_4(x):=\bigg(x^2-\frac{(n-5)\big[(n-3)^2+1\big]^2}{4(n-3)^2}\bigg)\big(x^2-\frac{25}{16}\big)^2
-\frac{\big[(n-3)^2+4\big]^2x^2}{8(n-3)^2}\big(x^2-\frac{25}{16}\big).
\end{eqnarray*}
For convenience, let $\eta:=\eta_1(T_n^4)$.
By Lemma \ref{Le-2-8} and the condition $n\geq 12$,
we have $\eta\geq\frac{(n-3)^3+2^3+2^3+(n-3)}{(n-3)^2+2^2+2^2+(n-3)}\geq 5$.
Moreover, from $g_4(\eta)=0$ it follows that
\begin{eqnarray*}
\eta^2-\frac{(n-4)\big[(n-3)^2+1\big]^2}{4(n-3)^2}
=\frac{\big[(n-3)^2+4\big]^2\eta^2}{8(n-3)^2(\eta^2-\frac{25}{16})}
-\frac{\big[(n-3)^2+1\big]^2}{4(n-3)^2},
\end{eqnarray*}
and hence,
\begin{eqnarray*}
g_3(\eta)&=&\bigg(\frac{\big[(n-3)^2+4\big]^2\eta^2}{8(n-3)^2(\eta^2-\frac{25}{16})}-\frac{\big[(n-3)^2+1\big]^2}{4(n-3)^2}\bigg)\big(\eta^2-\frac{50}{9}\big)
-\frac{\big[9+(n-3)^2\big]^2\eta^2}{36(n-3)^2}\\
&<&\bigg(\frac{\big[(n-3)^2+4\big]^2\eta^2}{8(n-3)^2(\eta^2-\frac{25}{16})}
-\frac{\big[(n-3)^2+1\big]^2}{4(n-3)^2}-\frac{\big[(n-3)^2+9\big]^2}{36(n-3)^2}\bigg)\big(\eta^2-\frac{50}{9}\big).
\end{eqnarray*}

Now, to get the desired result, we just need to show that $g_3(\eta)<0$.
Since $\eta^2-\frac{50}{9}>0$, this is equivalent to
\begin{eqnarray*}
\frac{\big[(n-3)^2+4\big]^2\eta^2}{8(n-3)^2(\eta^2-\frac{25}{16})}
\leq \frac{\big[(n-3)^2+1\big]^2}{4(n-3)^2}+\frac{\big[(n-3)^2+9\big]^2}{36(n-3)^2},
\end{eqnarray*}
that is,
\begin{eqnarray*}
\eta^2&\geq&\frac{25\big(9[(n-3)^2+1]^2+[(n-3)^2+9]^2\big)}{8\big(18[(n-3)^2+1]^2+2[(n-3)^2+9]^2-9[(n-3)^2+4]^2\big)}\\
&=&\frac{25( 5n^4 - 60n^3 + 288n^2 - 648n + 612)}{4(11n^4 - 132n^3 + 594n^2 - 1188n + 927)},
\end{eqnarray*}
which always holds for $n\geq 12$ and $\eta\geq 5$, as desired.

We next show that $\eta_1(T_n^4)>\frac{1}{2}(n-3)\sqrt{n-5}$.
Clearly, $S_{n-4}$ is a proper subgraph of $T_n^4$.
Furthermore, it is easy to check that
\begin{eqnarray*}
A_{ex}^{S_{n-4}}(T_n^4)=\frac{1}{2}\big(n-3+\frac{1}{n-3}\big)A(S_{n-4}),
\end{eqnarray*}
which is a principle submatrix of $A_{ex}(T_n^4)$.
Hence, by the interlacing theorem (see e.g., \cite{Brouwer}, p. 27) and Lemma \ref{Le-2-2}, we have
\begin{eqnarray*}
\eta_1(T_n^4)\geq\frac{1}{2}(n-3+\frac{1}{n-3})\sqrt{n-5}>\frac{1}{2}(n-3)\sqrt{n-5},
\end{eqnarray*}
as required, completing the proof of Claim \ref{clm-3}.
\end{proof}

Now, by combining the above three claims, as well as Theorem \ref{T-1-2} and (\ref{eq-1}), we obtain the desired result,
completing the proof of Theorem \ref{T-1-3}.
\hfill $\square$

\section{Further discussions}

In the previous section, we have shown that the path $P_n$ is the unique $n$-vertex tree with the minimal extended spectral radius.
Here we can say more about it. Indeed, from the proof of Theorem \ref{T-1-2},
we know that $\eta_1(P_n)<2$, $\eta_1(Z_n)>2$, $\eta_1(W_n)>2$, and $\eta_1(H_i)>2$ for $i=1,2,\ldots,6$.
Furthermore, observing that $C_n$ is a regular graph of degree 2, we have $\eta_1(C_n)=\lambda_1(C_n)=2$.
Also, for any connected graph $G\notin\{C_n,P_n,Z_n,W_n,H_1,H_2,H_3,H_4,H_5,H_6\}$,
Lemmas \ref{Le-2-4} and \ref{Le-2-3} yield that $\eta_1(G)\geq \lambda_1(G)>2$.
Now, by combining these arguments, we can determine the connected graphs having the minimal and the second-minimal extended spectral radius.

\begin{theorem}\label{T-4-1}
Let $G$ be a connected graph on $n$ vertices with $n\geq 5$. Then
$\eta_1(G)\geq \eta_1(P_n)$, and equality holds if and only if $G\cong P_n$.
If $G\ncong P_n$, then $\eta_1(G)\geq$ $\eta_1(C_n)=2$, and equality holds if and only if $G\cong C_n$.
\end{theorem}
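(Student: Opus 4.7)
The plan is to assemble the pieces already gathered in the discussion preceding the theorem; almost every inequality needed has been recorded in the proof of Theorem \ref{T-1-2} or follows directly from Lemmas \ref{Le-2-3}, \ref{Le-2-4}, and \ref{Le-2-8}. The only real task is a clean case analysis based on Smith's classification.

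First I would separate the connected graphs on $n \geq 5$ vertices into three groups: (a) $G \cong P_n$; (b) $G$ belongs to Smith's list but $G \ncong P_n$, i.e.\ $G \in \{C_n, Z_n, W_n, H_1, H_2, H_3, H_4, H_5, H_6\}$; (c) $G$ is not in Smith's list. For group (c), Lemma \ref{Le-2-3} yields $\lambda_1(G) > 2$, and Lemma \ref{Le-2-4} then gives $\eta_1(G) \geq \lambda_1(G) > 2$. For group (b), I would recall the inequalities computed in the proof of Theorem \ref{T-1-2}: applying Lemma \ref{Le-2-8} to each of $Z_n$, $W_n$, $H_1,\ldots,H_6$ yields $\eta_1 > 2$. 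The one remaining member of group (b) is $C_n$, which is $2$-regular, so Lemma \ref{Le-2-4} (equality case on the left) gives $\eta_1(C_n) = \lambda_1(C_n) = 2$.

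Next I would invoke the estimate $\eta_1(P_n) < 2$ established in the proof of Theorem \ref{T-1-2} (via the sign analysis of $\phi(A_{ex}(P_n); x)$ for $x \geq 2$). Combining the three groups, for every connected $G$ on $n \geq 5$ vertices with $G \ncong P_n$ we have $\eta_1(G) \geq 2 > \eta_1(P_n)$; this proves the first assertion, with uniqueness of the minimizer. For the second assertion, the same case split shows that if $G \ncong P_n$ then $\eta_1(G) \geq 2 = \eta_1(C_n)$, with equality forcing $G$ to fall into group (b) and to meet the equality case, i.e.\ $G \cong C_n$ (since every other graph in group (b) satisfies the strict inequality established above).

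There is no real obstacle here; the argument is essentially bookkeeping. The one place requiring care is verifying that the list of graphs handled in Theorem \ref{T-1-2} exhausts every $G \ncong C_n, P_n$ for which $\eta_1(G)$ could conceivably equal $2$: this is exactly what Smith's lemma (Lemma \ref{Le-2-3}) guarantees, combined with the observation that among connected graphs with $\lambda_1 \leq 2$ only the $2$-regular one ($C_n$) achieves $\eta_1 = \lambda_1 = 2$, while all the others (being irregular) still satisfy $\eta_1 > 2$ by the explicit $F/M_1$ lower bounds.
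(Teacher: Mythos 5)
Your proposal is correct and follows essentially the same route as the paper: the paper likewise combines Smith's classification (Lemma \ref{Le-2-3}) with Lemma \ref{Le-2-4} for graphs outside the list, the $F/M_1$ lower bounds from the proof of Theorem \ref{T-1-2} for $Z_n, W_n, H_1,\dots,H_6$, the $2$-regularity of $C_n$ to get $\eta_1(C_n)=2$, and the bound $\eta_1(P_n)<2$ already established. There is nothing to add.
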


By Theorem \ref{T-4-1}, and noting that $C_n$ is a unicycle graph and that $P_n$ is a bipartite graph,
we can obtain the following two simple corollaries.

\begin{corollary}
Let $G$ be a unicycle graph on $n$ vertices with $n\geq 5$. Then
$\eta_1(G)\geq \eta_1(C_n)=2$, and equality holds if and only if $G\cong C_n$.
\end{corollary}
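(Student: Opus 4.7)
The plan is to derive the corollary as a direct consequence of Theorem \ref{T-4-1}, which already classifies connected $n$-vertex graphs according to the minimum and second-minimum extended spectral radius. Since unicycle graphs are by convention connected, the only thing to verify is that the hypothesis $G\ncong P_n$ of the second assertion of Theorem \ref{T-4-1} is automatic in this class.

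First I would recall that a unicycle graph $G$ on $n$ vertices is, by definition, a connected graph containing exactly one cycle; equivalently, $G$ is connected with exactly $n$ edges. Since the path $P_n$ is a tree and therefore has $n-1$ edges, no unicycle graph is isomorphic to $P_n$. Hence $G\ncong P_n$ holds for every $G$ under consideration, and the second assertion of Theorem \ref{T-4-1} applies directly to give $\eta_1(G)\geq \eta_1(C_n)=2$, with equality if and only if $G\cong C_n$. I would then add a one-line sanity check that the extremal graph $C_n$ itself lies inside the class of unicycle graphs (it is connected and contains exactly one cycle, namely itself), so that the inequality is indeed attained and sharp.

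There is essentially no substantive obstacle here: all of the real work has already been done in Theorem \ref{T-4-1}, which in turn relies on Smith's classification (Lemma \ref{Le-2-3}), Lemmas \ref{Le-2-4} and \ref{Le-2-8}, and the estimate $\eta_1(P_n)<2$ established inside the proof of Theorem \ref{T-1-2}. The only point requiring any care is the edge-count observation that a unicycle graph cannot coincide with $P_n$, and this is immediate.
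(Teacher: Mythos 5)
Your proposal is correct and follows exactly the paper's route: the authors also deduce this corollary directly from Theorem \ref{T-4-1}, noting only that $C_n$ is a unicycle graph (and, implicitly, that a unicycle graph cannot be $P_n$, which your edge-count remark makes explicit). No further comment is needed.
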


\begin{corollary}
Let $G$ be a bipartite connected graph on $n$ vertices with $n\geq 5$. Then
$\eta_1(G)\geq \eta_1(P_n)$, and equality holds if and only if $G\cong P_n$.
\end{corollary}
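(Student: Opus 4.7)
The plan is to obtain this corollary as an immediate consequence of Theorem \ref{T-4-1}. Theorem \ref{T-4-1} asserts that over the class of all connected graphs on $n\geq 5$ vertices, the extended spectral radius is minimized uniquely by $P_n$. Any bipartite connected graph on $n$ vertices is in particular a connected graph on $n$ vertices, so the lower bound $\eta_1(G)\geq\eta_1(P_n)$ carries over verbatim to the subclass.

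The only point worth checking before restricting the statement is that the extremal graph $P_n$ actually lies in the subclass under consideration; otherwise restricting a minimization to a subclass can strictly increase the minimum. Here $P_n$ is manifestly bipartite: its vertex set can be two-colored by parity of distance from an endpoint. Hence $P_n$ belongs to the class of bipartite connected graphs on $n$ vertices, so the infimum over this class equals the infimum over all connected graphs on $n$ vertices, namely $\eta_1(P_n)$.

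Uniqueness transfers by the same reasoning: if $G$ is bipartite, connected, and satisfies $\eta_1(G)=\eta_1(P_n)$, then $G$ is a connected graph on $n$ vertices achieving the overall minimum in Theorem \ref{T-4-1}, so the equality clause of that theorem forces $G\cong P_n$. Since nothing in the derivation requires a separate argument, there is no real obstacle; the corollary is essentially a bookkeeping remark explaining that the unique minimizer from Theorem \ref{T-4-1} happens to sit inside the bipartite subclass. The only pitfall to avoid in writing it up is presenting the argument as if it required new analysis of bipartite structure, which would be misleading.
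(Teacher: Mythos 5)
Your proposal is correct and matches the paper's own reasoning: the paper derives this corollary directly from Theorem \ref{T-4-1} by simply noting that $P_n$ is bipartite, which is precisely your argument (including the observation that the extremal graph must lie in the subclass for the restriction to be valid). No further comment is needed.
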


By contrast, however, characterization of the (bipartite) connected graphs with the maximal extended spectral radius seems hard,
since for a (bipartite) connected graph $G$,
addition of an edge may lead to increase or decrease of the extended spectral radius of $G$ (see \cite{Ghorbani} for some comments),
which is very different from the (ordinary) spectral radius of $G$.
We here pose the following conjecture based on numerical experiments.

\begin{conj} \label{conj-1}
If $G$ is a (bipartite) connected graph of order $n\geq 5$, then
$$\eta_1(G)\leq \eta_1(S_n)=\frac{n^2-n+2}{2\sqrt{n-1}},$$
and equality holds if and only if $G\cong S_n$.
\end{conj}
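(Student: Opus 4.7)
The plan is to combine Lemma \ref{Le-2-4} with a case analysis on $\Delta(G)$. If $G$ is $k$-regular then $\eta_1(G) = \lambda_1(G) = k \leq n-1$, and an elementary verification shows $\eta_1(S_n) \geq n-1$ for all $n \geq 5$ (this reduces to $(n-1) + 1/(n-1) \geq 2\sqrt{n-1}$, equivalently $\sqrt{n-1} \geq 2$), with equality forcing $G \cong K_n$, but $\eta_1(K_n) = n-1 < \eta_1(S_n)$ strictly, so the conjecture holds in the regular case. For irregular $G$ with $\Delta(G) \leq n-2$, I would refine the bound of Lemma \ref{Le-2-4} by using a sharper estimate on $\lambda_1(G)$ than the crude $\lambda_1 \leq \Delta$, for instance the bound $\lambda_1(G) \leq \max_{uv \in E(G)}\sqrt{d_u d_v}$, together with a case split on $\delta(G)$, with the goal of extracting a $\sqrt{n-1}$-type factor matching the form of $\eta_1(S_n)$.

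The delicate case is $\Delta(G) = n-1$; let $v_1$ be the universal vertex, write $d_i$ for the degree of $v_i$ (with $d_1 = n-1$ and $1 \leq d_i \leq n-1$), and set $W_i := \frac{(n-1)^2 + d_i^2}{2(n-1) d_i}$. The Perron eigenvector equations read
\begin{align*}
\eta_1 x_1 &= \sum_{i \geq 2} W_i x_i,\\
\eta_1 x_i &= W_i x_1 + \sum_{\substack{j \geq 2\\ j \sim i}} \frac{d_i^2 + d_j^2}{2 d_i d_j} x_j \quad (i \geq 2).
\end{align*}
The key observation is that the function $f(x) := \frac{(n-1)^2 + x^2}{2(n-1) x}$ is strictly decreasing on $[1, n-1]$, so each additional edge among $v_2, \ldots, v_n$ strictly decreases the star-edge weights $W_i, W_j$, while contributing only a single internal edge of weight at most $\frac{(n-1)^2+1}{2(n-1)}$. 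I would turn this into a transformation lemma: for any $G \neq S_n$ with a universal vertex, there exists an internal edge $v_i v_j$ whose removal strictly increases $\eta_1$, and iterating terminates at $S_n$. A natural route is to evaluate the Rayleigh quotient of $A_{ex}(G - v_iv_j)$ at the Perron vector $x$ of $A_{ex}(G)$ and show positivity of the difference by carefully comparing the gain from the increased star weights against the lost internal-edge contribution, using the second eigenvalue equation to control $x_i$ in terms of $x_1$ and $d_i$.

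The main obstacle is precisely the proof of this transformation lemma. As the paper's Section~4 discussion emphasizes, $\eta_1$ is not monotone under edge addition, so one cannot appeal to a simple subgraph inequality. The argument has to track both (i) the change in $W_i, W_j$ and in the Perron components, which is always favorable toward $S_n$, and (ii) the loss of the internal-edge contribution $\frac{d_i^2 + d_j^2}{2 d_i d_j} x_i x_j$, whose size depends on delicate interactions between the degrees and Perron components at non-universal vertices and may conceivably dominate when $G$ has many internal edges concentrated on a few high-degree non-universal vertices. The bipartite version of the conjecture would follow by the same three-case analysis, since $S_n$ is bipartite and none of the case splits break under the bipartite restriction.
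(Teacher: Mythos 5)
First, note that the statement you set out to prove is posed in the paper as Conjecture \ref{conj-1}: the authors do not prove it, and offer only two pieces of supporting evidence (regular graphs and complete bipartite graphs, the first of which coincides with your regular case). So there is no proof in the paper to compare against, and your proposal must be judged as a stand-alone argument. As such it has two genuine gaps. For the case $\Delta(G)\leq n-2$ you only state an intention, and the route through Lemma \ref{Le-2-4} faces a real quantitative obstruction: for an irregular graph with $\delta(G)=1$ and $\Delta(G)=n-2$ the lemma gives $\eta_1(G)\leq\frac{1}{2}\bigl(n-2+\frac{1}{n-2}\bigr)\lambda_1(G)$, and $\lambda_1(G)$ can be of order $n$ for dense graphs, so the resulting bound is of order $n^2$ while $\eta_1(S_n)$ is only of order $n^{3/2}$; replacing $\lambda_1\leq\Delta$ by $\lambda_1\leq\max_{uv\in E}\sqrt{d_ud_v}$ does not change this order of magnitude. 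A genuinely different idea is needed there, and none is supplied.

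Second, and more centrally, the transformation lemma you isolate (for $G\neq S_n$ with a universal vertex, some internal edge can be deleted so that $\eta_1$ strictly increases) is exactly the hard content of the conjecture, and you explicitly leave it unproved. Moreover, the qualitative claim feeding into it is not quite right: deleting $v_iv_j$ lowers $d_i$ and $d_j$ and therefore changes the weight $\frac{1}{2}\bigl(\frac{d_i}{d_k}+\frac{d_k}{d_i}\bigr)$ of \emph{every} other internal edge at $v_i$ and $v_j$, and these weights can decrease (when $d_i<d_k$), so the bookkeeping is not simply ``star weights go up, one internal edge is lost.'' Since, as the paper's Section 4 stresses, $\eta_1$ is not monotone under edge addition or deletion, the Rayleigh-quotient comparison you sketch has no a priori sign, and the proposal as written does not close the argument. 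What you have is a correct treatment of the regular case (where your computation that $\eta_1(S_n)\geq n-1$ for $n\geq 5$ is sound) plus a plausible research plan for the remaining cases; it is not a proof, and the full statement remains open.
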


We provide two more evidences for Conjecture \ref{conj-1}.
First, if $G$ is a regular graph of degree $k$, then
$\eta_1(G)=\lambda_1(G)=k\leq n-1<\frac{n^2-n+2}{2\sqrt{n-1}}$.
Second, if $G$ is a complete bipartite graph $K_{a,\,b}$ ($a+b=n$ and $a\geq b\geq 1$), then
$$\eta_1(K_{a,\,b})=\frac{a^2+b^2}{2\sqrt{ab}}\leq\frac{n^2-n+2}{2\sqrt{n-1}}=\eta(S_n),$$
and equality holds if and only if $a=n-1$ and $b=1$, i.e., $K_{a,\,b}\cong S_n$.

Finally, it should be mentioned that, as an extension of the (ordinary) adjacency matrix of a graph,
the extended adjacency matrix contains more information about the graph (except the regular one),
which can be used to explain mathematically why the extended spectral radius and the extended energy possess high discriminating power
and correlate well with a number of physicochemical properties and biological activities of organic compounds.
However, the extension will lead to more complicated treatment on the study of the extended adjacency matrix.

\section*{Acknowledgments}
This work was supported by the National Science Foundation of China under Grant No. 11861011.

\section*{References}

\section*{Appendix}

\begin{prop}\label{prop-1}
If $n\geq 12$, then $$\frac{1}{2}(n-3)\sqrt{n-5}>\frac{1}{2}\big(n-4+\frac{1}{n-4}\big)\sqrt{\frac{1}{2}\big(n-1+\sqrt{n^2-14n+61}\big)}.$$
\end{prop}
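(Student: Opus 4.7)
The plan is to reduce the claim to a polynomial inequality in a single variable that can be certified by an elementary grouping argument. First I would substitute $u = n-4$, so that the hypothesis $n \geq 12$ becomes $u \geq 8$, and rewrite the relevant quantities as $n-3 = u+1$, $n-5 = u-1$, $n-1 = u+3$, and $n^2 - 14n + 61 = (u-3)^2 + 12$. Both sides of the original inequality are positive, so squaring removes the outer square root; multiplying through by $2u^2/(u^2+1)^2$ and isolating the remaining square root yields
\[\sqrt{(u-3)^2 + 12} < \frac{2u^2(u+1)^2(u-1) - (u+3)(u^2+1)^2}{(u^2+1)^2}.\]

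A routine expansion shows the numerator on the right equals $u^5 - u^4 - 4u^3 - 8u^2 - u - 3$, which is positive for $u \geq 8$ (since $u^5 - u^4 \geq 7u^4$ easily dominates $4u^3 + 8u^2 + u + 3$). Squaring once more and clearing denominators reduces the claim to the polynomial inequality
\[P(u) \,:=\, \bigl(u^5 - u^4 - 4u^3 - 8u^2 - u - 3\bigr)^2 - (u^2 - 6u + 21)(u^2+1)^4 \,>\, 0\]
for $u \geq 8$. Expanding both squared factors (the one step where care is required, but the degree-$10$ leading terms cancel exactly) gives
\[P(u) = 4u^9 - 32u^8 + 16u^7 - 60u^6 + 96u^5 - 52u^4 + 64u^3 - 36u^2 + 12u - 12.\]

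Finally, I would pair each negative coefficient with the adjacent positive term of higher degree to get
\[P(u) = 4u^8(u-8) + 4u^6(4u-15) + 4u^4(24u-13) + 4u^2(16u-9) + 12(u-1).\]
For $u \geq 8$ every summand is nonnegative, and the last four are strictly positive (since $4u-15 \geq 17$, $24u-13 \geq 179$, $16u-9 \geq 119$, and $u-1 \geq 7$), so $P(u) > 0$, which gives the proposition. The main obstacle is purely computational — carrying out the two polynomial expansions accurately so that the degree-$10$ terms cancel and the remaining coefficients admit the clean pairing above; after that the grouping makes the inequality transparent.
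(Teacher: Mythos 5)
Your proof is correct, and I checked the computations: with $u=n-4$ the numerator $2u^2(u+1)^2(u-1)-(u+3)(u^2+1)^2$ does expand to $u^5-u^4-4u^3-8u^2-u-3$, the degree-$10$ terms in $P(u)$ do cancel, the listed coefficients of $P(u)$ are all right, and your grouping reproduces $P(u)$ exactly (as a sanity check, $P(8)=20789076=28\cdot 742467$, consistent with the paper's own numbers). The overall reduction is the same as the paper's --- square twice, clear denominators, and certify positivity of the resulting polynomial, which in both cases is the \emph{same} degree-$9$ polynomial (the paper writes it as $4(n-5)f(n)$ with $f$ of degree $8$; your $P(u)$ equals $4(u-1)f(u+4)$). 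Where you genuinely diverge is in the positivity certificate: the paper keeps the variable $n$, factors out $4(n-5)$, and then runs a cascade of eight successive derivatives of $f$, checking each at $n=12$, whereas you shift to $u=n-4$ (so the threshold becomes the round number $u\geq 8$) and pair each negative coefficient with the adjacent higher-degree positive term, so that every summand in
\[
P(u)=4u^8(u-8)+4u^6(4u-15)+4u^4(24u-13)+4u^2(16u-9)+12(u-1)
\]
is nonnegative for $u\geq 8$. Your version is shorter and easier to verify by hand (five small evaluations instead of eight derivative computations plus eight evaluations); the paper's derivative argument is more mechanical and would survive a perturbation of the coefficients that breaks the clean pairing, but for this particular polynomial your grouping is the more transparent certificate. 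One small point of care: when you isolate the inner square root and square again, you correctly note that the right-hand side is positive for $u\geq 8$ before squaring, which is needed for the equivalence; keep that remark in the final write-up.
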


\begin{proof}
It is easy to check that
\begin{eqnarray} \label{eq-2}
&&\frac{1}{2}(n-3)\sqrt{n-5}>\frac{1}{2}\big(n-4+\frac{1}{n-4}\big)\sqrt{\frac{1}{2}\big(n-1+\sqrt{n^2-14n+61}\big)}\nonumber\\
\Leftrightarrow &&\big[2(n-3)^2(n-4)^2(n-5)-\big((n-4)^2+1\big)^2(n-1)\big]^2\nonumber\\
&&-\big((n-4)^2+1\big)^4(n^2-14n+61)>0\nonumber\\
\Leftrightarrow &&4(n - 5)(n^8-39n^7+641n^6-5882n^5+33246n^4-119207n^3\nonumber\\
&&+265885n^2-338328n+188499)>0.
\end{eqnarray}

In order to prove (\ref{eq-2}), we consider the function
$f(x):=x^8-39x^7+641x^6-5882x^5+33246x^4-119207x^3+265885x^2-338328x+188499$ with $x\geq 12$,
whose derivatives are given by
\begin{eqnarray*}
&&f^{(8)}(x)=40320,\, f^{(7)}(x)=40320x - 196560,\\
&&f^{(6)}(x)=20160x^2 - 196560x + 461520,\\
&&f^{(5)}(x)=6720x^3 - 98280x^2 + 461520x - 705840,\\
&&f^{(4)}(x)=1680x^4 - 32760x^3 + 230760x^2 - 705840x + 797904,\\
&&f^{\prime\prime\prime}(x)=336x^5 - 8190x^4 + 76920x^3 - 352920x^2 + 797904x - 715242,\\
&&f^{\prime\prime}(x)=56x^6 - 1638x^5 + 19230x^4 - 117640x^3 + 398952x^2 - 715242x \\
&&~~~~~~~~~~ + 531770,\\
&&f^{\prime}(x)=8x^7 - 273x^6 + 3846x^5 - 29410x^4 + 132984x^3 - 357621x^2 \\
&&~~~~~~~~~~ + 531770x - 338328 .
\end{eqnarray*}
When $x\geq 12$, since $f^{(8)}(x)>0$, we know that the function $f^{(7)}(x)$ increases strictly with respective to $x$,
and hence $f^{(7)}(x)\geq f^{(7)}(12)=287280>0$.
Similarly, when $x\geq 12$, we have
\begin{eqnarray*}
&&~~~~~~f^{(7)}(x)>0\\
&&\Longrightarrow f^{(6)}(x)\geq f^{(6)}(12)=1005840>0
\Longrightarrow
f^{(5)}(x)\geq f^{(5)}(12)=2292240>0\\
&&\Longrightarrow
f^{(4)}(x)\geq f^{(4)}(12)=3784464>0
\Longrightarrow f^{\prime\prime\prime}(x)\geq f^{\prime\prime\prime}(12)=4736598>0\\
&&\Longrightarrow f^{\prime\prime}(x)\geq f^{\prime\prime}(12)=4497602>0
\Longrightarrow f^{\prime}(x)\geq f^{\prime}(12)=2984784>0\\
&&\Longrightarrow f(x)\geq f(12)=742467>0,
\end{eqnarray*}
as desired. This completes the proof of Proposition \ref{prop-1}.
\end{proof}


\begin{thebibliography}{00}
\bibitem{Biggs}
         N. Biggs,
         Algebraic Graph Theory, 2nd ed.,
         Cambridge University Press, Cambridge, 1993.
\bibitem{Brouwer}
         A. E. Brouwer, W. H. Haemers,
         Spectra of Graphs,
         Springer, 2011.
\bibitem{Brualdi}
         R. Brualdi, E. S. Solheid,
         On the spectral radius of complementary acyclic matrices of zeros and ones,
         SIAM J. Algebra. Discrete Method. 7 (1986) 265--272.
\bibitem{Chang}
         A. Chang, Q. Huang,
         Ordering trees by their largest eigenvalues,
         Linear Algebra Appl. 370 (2003) 175--184.
\bibitem{Das}
         K. C. Das, I. Gutman, B. Furtula,
         On spectral radius and energy of extended adjacency matrix of graphs,
         Appl. Math. Comput. 296 (2017) 116--123.
\bibitem{Furtula}
         B. Furtula, I. Gutman,
         A forgotten topological index,
         J. Math. Chem. 53 (2015) 1184--1190.
\bibitem{Ghorbani}
         M. Ghorbani, X. Li, S. Zangi, N. Amraei,
         On the eigenvalue and energy of extended adjacency matrix,
         Appl. Math. Comput. 397 (2021) 125939.
\bibitem{Gutman}
         I. Gutman, N. Trinajsti\'c,
         Graph theory and molecular orbitals, total $\pi$-electron energy of alternant hydrocarbons,
         Chem. Phys. Lett. 17 (1972) 535--538.
\bibitem{Lin}
         W. Lin, Z. Yan, P. Fu, J. Liu,
         Ordering trees by their ABC spectral radii,
         Int. J. Quantum Chem. 121(5) (2021) e26519.
\bibitem{A}
         A. Neumaier,
         The second largest eigenvalue of a tree,
         Linear Algebra Appl. 46 (1982) 9--25.
\bibitem{smith}
         J. H. Smith,
         Some properties of the spectrum of a graph,
         Combinatorial sturctures and their applications, New York-London-Paris, 1970, pp. 403--406.
\bibitem{wang}
         Z. Wang, Y. Mao, B. Furtula, X. Wang,
         Bounds for the spectral radius and energy of extended adjacency matrix of graphs,
         Linear Multilinear A. 69 (2021) 1813--1824.
\bibitem{Yang}
         Y. Yang, L. Xu, C. Hu,
         Extended adjacency matrix indices and their applications,
         J. Chem. Inf. Comput. Sci. 34 (1994) 1140--1145.
\end{thebibliography}
\end{document}